\title[Sparse domination in nonhomogeneous trees]{Endpoint estimates and sparse domination in nonhomogeneous trees}
\author[Conde Alonso, De Mari, Monti, Rizzo, Vallarino]{Jos\'e M. Conde Alonso , Filippo De Mari, Matteo Monti, Elena Rizzo, Maria Vallarino}
\thanks{Conde-Alonso was partially supported by grants \texttt{CNS2022-135431} and \texttt{RYC2019-027910-I} (Ministerio de Ciencia, Spain). 
De Mari, Monti, Rizzo and Vallarino were partially supported by the Project ``Harmonic analysis on continuous and discrete structures'' (bando Trapezio Compagnia di San Paolo CUP \texttt{E13C21000270007}) and are members of the Gruppo Nazionale per l'Analisi Matematica, la Probabilit\`a e le loro Applicazioni (GNAMPA) of the Istituto Nazionale di Alta Matematica (INdAM). Monti is supported by the PRIN 2022 project “TIGRECO—TIme-varying signals on Graphs: REal and COmplex methods” funded by the European Union—Next Generation EU (grant \texttt{20227TRY8H}, CUP \texttt{F53D23002630001}).
}
\newtheorem{theorem}{Theorem}[section]
\newtheorem{lemma}[theorem]{Lemma}
\theoremstyle{definition}
\newtheorem{prop}[theorem]{Proposition}
\numberwithin{equation}{section}
\newtheorem{ltheorem}{Theorem}
\theoremstyle{remark}
\newtheorem{remark}[theorem]{Remark}
\begin{document}


\newcommand{\N}{\mathbb{N}}
\newcommand{\Z}{\mathbb{Z}}
\newcommand{\R}{\mathbb{R}}
\newcommand{\C}{\mathbb{C}}


\newcommand{\D}{\mathscr{D}}
\newcommand{\M}{\mathcal{M}}
\newcommand{\avg}[2]{\langle{#1}\rangle_{#2}}


\newcommand{\Tr}{\mathcal{X}}
\newcommand{\orig}{0}
\newcommand{\qf}{q}
\newcommand{\qfint}{\tilde{\qf}}
\newcommand{\Sec}{S}
\newcommand{\graphdist}{d}
\newcommand{\Grdist}{\mathrm{Gr}}
\newcommand{\sparsedist}{\mathfrak{d}}
\newcommand{\sons}[1]{\D_1(#1)}
\newcommand{\cl}[1]{\overline{#1}}
\newcommand{\anc}[2]{{#1}^{(#2)}}
\newcommand{\Or}[1]{x_{#1}}


\newcommand{\one}{1}
\newcommand{\Ss}{\mathcal{S}}
\newcommand{\As}{\mathcal{A}_{\Ss}} 
\newcommand{\Extras}{\mathcal{E}_{\Ss}} 


\newcommand{\BMO}{\mathrm{BMO}}
\newcommand{\Hone}{\mathrm{H}^1}
\newcommand{\Berg}{\mathcal{A}}
\newcommand{\cexp}{\mathsf{E}}
\newcommand{\martdiff}{\mathsf{D}}


\newcommand{\RH}{\varphi}


\newcommand{\Proj}{\mathcal{P}}
\newcommand{\K}{\mathcal{K}} 
\newcommand{\h}{h} 
\newcommand{\Diff}{\mathcal{D}} 
\newcommand{\Prest}{\Proj_{\mathcal{F}^1}^{Q_0}}
\newcommand{\PrestF}{\Proj_{\mathcal{F}}^{Q_0}}
\newcommand{\Krest}{\K_{\mathcal{F}^1}^{Q_0}}
\newcommand{\KrestF}{\K_{\mathcal F}^{Q_0}}

\newcommand{\BB}{\mathcal{B}} 
\newcommand{\tBB}{\tilde{\BB}}
\newcommand{\Muck}{{\BB_2(\mu)}}
\newcommand{\ME}{{\BB_2^D(\mu)}}


\newcommand{\CZ}{Calder\'on-Zygmund }
\newcommand{\supp}{{\rm supp}}

\maketitle

\begin{abstract}
We prove endpoint and sparse-like bounds for Bergman projectors on nonhomogeneous, radial trees $\Tr$ that model manifolds with possibly unbounded geometry. The natural Bergman measures on $\Tr$ may fail to be doubling, and even locally doubling, with respect to the right metric in our setting. Weighted consequences of our sparse domination results are also considered, and are in line with the known results in the disk. Our endpoint results are partly a consequence of a new Calder\'on-Zygmund theory for discrete, non-locally doubling metric spaces. 
\end{abstract}


\section*{Introduction}

Discrete models of continuous objects are omnipresent in harmonic analysis. Many operators of interest can often be difficult to study directly due to issues related to convergence in their definition, subtle singular behavior, or other measure theoretic/functional analytic complications. Discrete models can remedy all these by providing objects which are easier to handle and that are defined and act on a measure space that can be identified with $\N$. This allows one to isolate the questions that one is interested in answering from the technicalities that surround them. Examples of successful discrete models include Haar shifts and other dyadic objects like square functions or martingale transforms (see \cite{Pe2019} for a general account or \cite{Hy2012} for an outstanding result), wavelet transforms \cite{DPWW2024}, or weighted graphs that model weighted manifolds \cite{KLW}.

We are interested in discrete models of the hyperbolic disk and other noncompact symmetric manifolds. The natural discrete structure to use in that case is that of infinite trees. Trees are connected, loop-free graphs which may have a distinguished vertex $\orig$. They are called homogeneous when all vertices have the same number of neighbors. It is known that homogeneous trees can be thought of as a discrete analogue of the hyperbolic disk, as made precise in, for example, \cite{CC1994}. In the same spirit, more general trees can be thought of as discrete counterparts of more general Riemannian manifolds. In this work, we consider trees $\Tr$ that are radial. We say that $\Tr$ is radial when each vertex $x$ has exactly $\qf(x)+1$ neighbors in $\Tr$, and there exists $\qfint:\Z_+ \to \Z_+$ such that
$$
\qf(x) = \qfint(|x|) \quad \mbox{for all} \; x \in \Tr,
$$
where $|x|$ denotes the distance from $x$ to $\orig$. Therefore, all vertices $x$ at the same distance from $\orig$ have the same number of neighbors. Radial trees can be seen as analogues of noncompact rotationally symmetric Riemannian manifolds (see \cite[Section 2.3]{Pet}), and so our results might shed light on their analysis at infinity. 

Radial trees $\Tr$ can be equipped with a measure from a natural class. Fix $\alpha >1$. We consider a measure on $\Tr$ given by 
$$
\mu(\{x\}) = \prod_{\ell=0}^{|x|-1} \frac{1}{\qfint(\ell)^\alpha}, \quad x \in \Tr.
$$
We assume $\qfint(\ell) \geq 2$ for all $\ell$, so $\mu$ is a finite measure. The measure spaces that one obtains by making $\alpha=0$ or $\alpha=1$, which do not fall under the scope of our work, correspond to the well known cases of the counting measure and the canonical flow measure, respectively. The case of $\alpha>1$ has also been considered before, albeit most often in a special situation described above, the homogeneous one. The measures introduced above are indeed a generalization of a class of the measures introduced in \cite{CCPS}, where harmonic Bergman spaces on homogeneous trees were first considered. In turn, those are the natural discrete counterpart of the natural Bergman measures in the continuous hyperbolic disk, with the case of $\alpha=2$ corresponding to the Lebesgue measure.

We aim to understand the behavior of certain integral operators on $(\Tr,\mu)$ and its associated $L^p$-spaces. An outstanding example of such an operator of interest in the hyperbolic disk |and other hyperbolic manifolds| is the Bergman projector $\Proj$, the projection from $L^2$ to the subspace of holomorphic functions. In our real variable, discrete context, $\Proj$ is the orthogonal projection from $L^2(\Tr)$ to the Bergman space $\Berg^2(\Tr)$, the subspace of functions which are harmonic with respect to the combinatorial Laplacian on $\Tr$. We are interested in the $L^p$ and endpoint behavior of $\Proj$. In addition, understanding weighted inequalities for $\Proj$ is important in the study of Toeplitz operators in the continuous case, and we shall make a contribution in that direction in the discrete case. The work \cite{dMMV2023} already considers $L^p$-boundedness for $\Proj$ in the homogeneous case, but weighted inequalities do not seem to follow easily from the approach taken there. In addition, our mild assumptions on $\Tr$ imply that its geometry may be unbounded. In terms of harmonic analysis, this means that the measure $\mu$ is not only nondoubling with respect to the natural graph metric, but it is also nondoubling |even non-locally doubling| with respect to the Gromov metric \cite[p. 245]{Wo}, which is the right one to study $\Proj$ as indicated by the results in \cite{Mo}. The lack of the doubling property of $\mu$ is one of the main challenges that we have to overcome in our setting.

The last few years have seen the development of a new technique within harmonic analysis that entails (usually sharp) weighted inequalities: sparse domination. Initially developed to provide a simpler proof of the so-called $A_2$ theorem \cite{Le2013,Le2013b}, it has later evolved to become a widespread tool to prove sharp estimates for many operators of interest. One can consult the survey \cite{Pe2019} and the references therein for an extensive account of the developments in the area. A collection of sets $\Ss$ is sparse if for each $Q \in \Ss$ there exists $E_Q \subseteq Q$ such that $\mu(E_Q) \geq \frac{1}{2} \mu(Q)$, and the family $\{E_Q: Q \in \Ss\}$ is pairwise disjoint. The sparse operator associated with $\Ss$ is an averaging operator
$$
\As f(x) = \sum_{Q\in \Ss} \langle f \rangle_Q \one_Q(x),
$$
where $\avg{f}{Q}$ denotes the integral average of $f$ over the set $Q$. A sparse domination result for an operator $T$ |in the dual sense| is an inequality like
$$
\langle Tf_1,f_2\rangle \lesssim \langle \As f_1, f_2 \rangle,
$$
where the sparse collection of sets $\Ss$ may depend on $f_1$ and $f_2$. We intend to follow that trend and look for an appropriate sparse domination result for $\Proj$. Like we said, the main obstacle that we shall encounter is that, even with the right choice of metric structure on $\Tr$
, $\mu$ fails to be doubling over the relevant family of sets that are candidates to belong to a sparse family $\Ss$. In the nondoubling context, the existing sparse literature is either unsatisfactory \cite{CoPa2019,VoZK2018} or requires a substantial modification of the concept of sparse form \cite{CPW2023}. This modification consists in adding an extra bilinear form to the right hand side of the inequality which accounts for certain off-diagonal interactions between the two functions $f_1$ and $f_2$. Despite the fact that this extra form cannot be bounded by a sparse one |for the original operator does not admit sparse domination itself|, one can still recover useful quantitative information, including $L^p$ boundedness results, from the resulting domination. We follow this and term our extra form $\Extras$.

\begin{ltheorem} \label{th:theoremA}
Let $f_1,f_2 \in L^1(\Tr)$. There exists a sparse collection $\Ss$ such that
$$
|\langle \Proj f_1,f_2 \rangle| \lesssim \langle \As f_1, f_2 \rangle + \Extras(f_1,f_2).
$$
\end{ltheorem}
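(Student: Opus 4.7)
The plan is to adapt the nondoubling sparse-like domination framework of \cite{CPW2023} to the discrete setting of $(\Tr,\mu)$ equipped with the Gromov metric. This requires three ingredients: a dyadic structure on $\Tr$ compatible with the Gromov metric, whose cubes are naturally taken to be subtrees hanging below a vertex; size and cancellation estimates for the reproducing kernel of $\Proj$, along the lines of \cite{Mo,dMMV2023}; and the new \CZ theory for nondoubling discrete metric spaces developed earlier in this paper, which replaces the classical \CZ decomposition wherever doubling of $\mu$ would otherwise be invoked.

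First, I would construct the sparse family $\Ss$ by the standard iterated stopping-time procedure applied to $f_1$ and $f_2$. Starting from a top cube $Q_0$ that contains $\supp f_1\cup\supp f_2$, the children of $Q\in\Ss$ are chosen to be the maximal dyadic subcubes $Q'\subset Q$ on which $\avg{|f_1|}{Q'}>C\avg{|f_1|}{Q}$ or $\avg{|f_2|}{Q'}>C\avg{|f_2|}{Q}$. Because this selection rule uses only the structure of the dyadic lattice and the $L^1$ weak-type bound for the dyadic maximal function (which holds without any doubling hypothesis), the union of the children has $\mu$-measure at most $\mu(Q)/2$; thus $\Ss$ is sparse with $E_Q$ the complement of the children inside $Q$.

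Next, I would telescope $\langle \Proj f_1,f_2\rangle$ along $\Ss$ and split each contribution into a diagonal and an off-diagonal part. On $E_Q$ the stopping condition keeps $|f_1|$ and $|f_2|$ comparable to $\avg{|f_1|}{Q}$ and $\avg{|f_2|}{Q}$; combining this with an $L^2$-type bound for $\Proj$ localized to $Q$ yields a diagonal estimate of order $\avg{|f_1|}{Q}\avg{|f_2|}{Q}\mu(Q)$, which sums over $Q\in\Ss$ to the sparse form $\langle\As f_1,f_2\rangle$. The remaining off-diagonal interactions, which couple $f_1$ and $f_2$ across different scales of $\Ss$ through the kernel of $\Proj$, are exactly those that in a doubling setting would be absorbed via the classical \CZ estimate; here they must instead be organized into the extra bilinear form $\Extras(f_1,f_2)$, following the bookkeeping of \cite{CPW2023}.

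The main obstacle is the absence of doubling of $\mu$ with respect to the Gromov metric. When one attempts to dominate the off-diagonal terms by integrating kernel estimates against $|f_1||f_2|$, the natural bounds involve measures of dilated cubes that cannot be compared to the measure of the cube itself, so these terms refuse to collapse into a sparse form. The new \CZ theory developed earlier in the paper is designed precisely to bypass this difficulty, and I expect it to supply the size and cancellation estimates needed to package the off-diagonal part into $\Extras$. Reconciling the Gromov dyadic structure with that \CZ machinery, and verifying the required kernel decay for $\Proj$, will be the most delicate step.
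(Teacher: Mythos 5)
Your outline reproduces the correct skeleton (stopping cubes chosen through the dyadic maximal function, sparsity from its weak $(1,1)$ bound, a good/diagonal part handled by localized $L^2$ bounds, and an iteration), and this is indeed how the paper's proof is organized. But there is a genuine gap at the heart of the argument: you never actually estimate the off-diagonal part, you only declare that it ``must be organized into $\Extras$ following the bookkeeping of \cite{CPW2023}'' and that the Calder\'on--Zygmund theory developed for Theorem \ref{th:theoremC} will supply what is needed. Neither statement can carry the weight. The measure $\mu$ is not balanced in the sense of \cite{LSMP2012}, so the Haar-shift machinery of \cite{CPW2023} does not transfer; and the H\"ormander-type conditions \eqref{eq:HormCond}--\eqref{eq:sizeCond} behind Theorem \ref{th:theoremC} are strictly weaker than what the sparse argument requires. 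What the paper actually uses is a Calder\'on--Zygmund decomposition with arbitrarily large disjoint stopping cubes (Lemma \ref{lem:CZnonhomogeneous}, including the $\BMO$ bound for the good part), the uniform $L^2$ bound \eqref{eq:L2unifBound} for the truncations $\Proj^{Q_0}_{\mathcal F^1}$, and above all the pointwise kernel estimates of Lemma \ref{lem: KrestF} and Propositions \ref{prop:Krest} and \ref{prop: estim D}, which exploit the joint cancellation of each block $\Diff_Q$ and record both the geometric decay $\nu_j^k$ across scales and the extra gain $1/q(x_Q^{(1)})$ when the two arguments of the kernel sit in different children of $Q^{(1)}$. Without these operator-specific estimates there is no way to make the bad--bad interactions summable, let alone to identify the shape of $\Extras$.

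Relatedly, your description of where $\Extras$ comes from is off. The cross-scale interactions you single out are in fact harmless: the decay $\nu_{|x_Q|-k}^{|x_Q|}$ makes them summable and they are absorbed into the ordinary sparse form (terms of the type $\avg{f_1}{Q}\avg{f_2}{Q_0}$). The term that cannot be absorbed is a same-scale one: the pairing of $b_{1,Q}$ against the bad parts $b_{2,R}$ supported in sibling sectors $R$ with $R^{(1)}=Q^{(1)}$, where the kernel of $\Diff_{Q^{(1)}}$ is of size $1/(q(x_Q^{(1)})\mu(Q))$; this is precisely what produces the weight $\tfrac{1}{q(x_Q^{(1)})}\sum_{R:\,R^{(1)}=Q^{(1)}}\avg{f_2}{R}$ in \eqref{eq:sparsenew}. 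A smaller inaccuracy: on $E_Q$ the stopping condition only gives the upper bound $\M_\D f_i\lesssim\avg{f_i}{Q}$, not two-sided comparability, which is why the paper runs the diagonal estimate through the good functions $g_i$ (with $\|g_i\|_2^2\lesssim\lambda_i\|f_i\|_1$ and $\|g_i\|_{\BMO}\lesssim\lambda_i$) rather than through $f_i$ restricted to $E_Q$.
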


We postpone the definition of $\Extras$ to Section \ref{sec2} (see formula \eqref{eq:sparsenew}). It captures the local behavior of the operator, accounting for interactions between vertices in $\Tr$ |and hence sets in $\Ss$| that are close to one another. $\Extras$ cannot be avoided in the general case, but it is not needed when $\qf$ is a bounded function. In that case, our argument can be easily modified to yield a domination of $\Proj$ involving only a usual sparse form. It should be noted that in that case the result can be achieved using one of the existing sparse domination principles, like that in \cite{Lo2021}. However, as we already explained, the nondoubling context poses significant difficulties making all previously existing approaches unapplicable. In order to prove Theorem \ref{th:theoremA}, we avoid writing down a explicit formula for the kernel of $\Proj$, and instead we look at it as a variant of a Haar shift. The analogy is far from perfect and we still need to account for tail issues and lack of localization, but it allows us to loosely follow an approach reminiscent of the beautiful proof in \cite{CuDPOu2018}. 

Our sparse domination result for $\Proj$ is of course tailored to the operator under analysis. However, a more general principle lies behind it that can be applied to any discrete nondoubling setting, where small sets and local singularities do not play any role. Indeed, given a general linear operator $T$, our sparse domination procedure can always be carried out, leading to the identification of the right analogue of $\Extras$ if it is needed. One can see the dyadic results of \cite{CPW2023} and Theorem \ref{th:theoremA} above as instances of this general strategy, of which more details are given in Section \ref{sec2}.

We next identify the weighted inequalities for $\Proj$ that follow from our sparse domination result. As we explained above, we are forced to include the form $\Extras$ as part of our dominating term. This enters the definition of our classes of weights: the right one to dominate sparse operators is the Muckenhaupt class, while we necessarily need a larger one. For each $1<p<\infty$, we denote the Muckenhaupt-like class of weights associated to the sets that form the families $\Ss$ by $\BB_p(\mu)$. Additionally, a |generally larger| quantity $[w]_{\tBB_p(\mu)}$ is introduced in order to bound the term $\Extras$ of the sparse domination. Then, our weighted inequalities for $\Proj$ read as follows:   

\begin{ltheorem} \label{th:theoremB}
Let $1<p<\infty$. If $w \in \tBB_p(\mu)$, then 
$$
\|\Proj f\|_{L^p(\Tr,w)} \lesssim_{[w]_{\tBB_p(\mu)}} \| f\|_{L^p(\Tr,w)}.
$$
\end{ltheorem}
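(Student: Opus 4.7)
The natural strategy is to deduce Theorem \ref{th:theoremB} from Theorem \ref{th:theoremA} by controlling each of the two forms on the right-hand side of the sparse domination separately in weighted norms. By duality, to prove the theorem it suffices to bound $|\langle \Proj f, g\rangle|$ for $f \in L^p(\Tr,w)$ and $g \in L^{p'}(\Tr,\sigma)$ with $\sigma = w^{1-p'}$, where (one hopes) both pairings will yield the right constant in terms of $[w]_{\tBB_p(\mu)}$ and the norms of $f$ and $g$. Applying Theorem \ref{th:theoremA} to such a pair produces a sparse collection $\Ss$ (depending on $f,g$) such that
$$
|\langle \Proj f, g\rangle| \lesssim \langle \As f, g\rangle + \Extras(f,g),
$$
and the plan is to estimate the two summands on the right by $[w]_{\tBB_p(\mu)}\|f\|_{L^p(w)}\|g\|_{L^{p'}(\sigma)}$.

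For the sparse term $\langle \As f,g\rangle = \sum_{Q\in \Ss}\avg{f}{Q}\avg{g}{Q}\mu(Q)$, I would carry out the by-now classical argument. Using the disjointness of the $\{E_Q\}_{Q\in\Ss}$ and the inequality $\mu(Q) \le 2\mu(E_Q)$, one replaces each $\mu(Q)$ by an integral over $E_Q$ and, after introducing the weights $w$ and $\sigma$ inside the averages in a symmetric Hölder-type fashion, recognizes the resulting expression as an integral of a product of dyadic-type maximal functions adapted to the measures $w\,d\mu$ and $\sigma\,d\mu$. Hölder's inequality in $L^p$ and the weighted boundedness of these maximal operators (which is exactly what the definition of the Muckenhaupt-like class $\BB_p(\mu)$ is designed to give) close this bound with a constant depending on $[w]_{\BB_p(\mu)} \le [w]_{\tBB_p(\mu)}$.

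The genuinely new point is the bound on the extra form $\Extras(f,g)$. Since the paper explicitly constructs $\tBB_p(\mu)$ as the class of weights that quantifies precisely the off-diagonal pairings captured by $\Extras$, the approach here is less about clever estimation and more about unpacking the definition in \eqref{eq:sparsenew}: write $\Extras(f,g)$ as a sum over pairs $(Q,R)$ of nearby sparse cubes of a product of averages of $f$ and $g$, insert $w^{1/p}w^{-1/p}$ and $\sigma^{1/p'}\sigma^{-1/p'}$ so as to turn each average into one weighted by $w$ or $\sigma$ and a "bad'' factor involving $w$ and $\sigma$ over the relevant pair, and then control the latter factors by $[w]_{\tBB_p(\mu)}$. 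After this rearrangement the estimate should again reduce to a weighted Hölder inequality together with the $L^p(w)$--$L^{p'}(\sigma)$ boundedness of a maximal operator, now one that sees the off-diagonal geometry encoded in $\Extras$.

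The main obstacle I expect is this last step. The non-doubling nature of $\mu$ with respect to the Gromov metric means that the off-diagonal pairs in $\Extras$ cannot be absorbed into the diagonal by any doubling-type covering argument, and so one cannot reuse the sparse bound. The definition of $[w]_{\tBB_p(\mu)}$ must therefore be calibrated so that the sum over pairs still telescopes or can be summed geometrically along the tree structure of $\Tr$, and verifying that this is compatible with the sparse collection $\Ss$ given by Theorem \ref{th:theoremA} is where most of the work should lie. If the class $\tBB_p(\mu)$ has been defined with this pairing in mind, the two bounds combine to give the claim; otherwise, one would need to further decompose $\Extras$ according to the "depth'' of interaction between pairs in $\Ss$ and iterate the argument.
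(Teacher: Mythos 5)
Your overall strategy is the same as the paper's: dualize, invoke Theorem \ref{th:theoremA}, bound the sparse form by the standard argument (disjointness of the $E_Q$, weighted dyadic maximal functions, $[w]_{\BB_p(\mu)}\le[w]_{\tBB_p(\mu)}$), and then bound $\Extras$ using the $\tBB_p(\mu)$ condition; your dualization against $g\in L^{p'}(\sigma)$ with $\sigma=w^{1-p'}$ is equivalent to the paper's pairing of $\Proj f$ with $gw$, $g\in L^{p'}(\Tr,w)$. The sparse part is fine. The problem is that the genuinely new step --- the estimate of $\Extras$ --- is only described as a hope (``should again reduce to a weighted H\"older inequality together with the boundedness of a maximal operator''), and the place where you say ``most of the work should lie'' is exactly what is missing. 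Your closing speculation that one might have to ``further decompose $\Extras$ according to the depth of interaction'' shows that the mechanism has not been identified: by \eqref{eq:sparsenew}, $\Extras$ couples only cubes $Q,R\in\Ss$ sharing the same parent, $Q^{(1)}=R^{(1)}$, so there is no depth to sum over; this is precisely why $[w]_{\tBB_p(\mu)}$ is defined by testing on sibling pairs.

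Concretely, the ingredients that close the bound, and which your sketch does not supply, are the following. After writing $\avg{f}{Q}=\avg{fw^{p'/p}}{Q,w^{-p'/p}}\,w^{-p'/p}(Q)/\mu(Q)$ and applying H\"older over $Q\in\Ss$, the first factor is summed using the sparse property \eqref{eq:ApSparse} together with the $L^p$-boundedness of the dyadic maximal operator adapted to the measure $w^{-p'/p}d\mu$, which yields $[w]_{\BB_p(\mu)}^{p'/p^2}\|f\|_{L^p(\Tr,w)}$. For the second factor, the normalization $1/q(x_Q^{(1)})$ built into \eqref{eq:sparsenew} is essential twice: first, since $Q^{(1)}$ has at most $q(x_Q^{(1)})$ children, the inner sibling sum is a sub-average and Jensen's inequality moves the exponent $p'$ inside; second, after bounding the resulting sibling-pair quantity $\avg{w^{-p'/p}}{Q}^{1/p'}\avg{w}{R}^{1/p}$ by $[w]_{\tBB_p(\mu)}^{1/p}$ (this is exactly the definition of the class, applied to pairs with a common parent), one interchanges the sums over $Q$ and $R$, and the same $1/q$ factor absorbs the fact that each fixed $R$ has at most $q(x_{R^{(1)}})$ siblings $Q$; what remains is $\bigl(\sum_{R\in\Ss}\avg{gw}{R,w}^{p'}w(R)\bigr)^{1/p'}$, which is again summed via \eqref{eq:ApSparse} and the $w\,d\mu$-maximal function, contributing $[w]_{\BB_p(\mu)}^{1/p'}$. (A small but necessary geometric fact enters here: sibling sectors in a radial tree have equal measure.) Altogether this gives $\Extras(f,gw)\lesssim [w]_{\tBB_p(\mu)}^{1/p}[w]_{\BB_p(\mu)}^{p'/p^2+1/p'}\|f\|_{L^p(\Tr,w)}\|g\|_{L^{p'}(\Tr,w)}$, which is what Theorem \ref{th:theoremB} requires. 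Without the sibling-only observation, the Jensen step via the $1/q$ normalization, and the interchange of sums, your argument remains a plausible plan rather than a proof.
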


The dependence on $[w]_{\tBB_p(\mu)}$ in the inequality above can be traced. For example, if $p=2$ one gets $[w]_{\BB_2(\mu)}[w]_{\tBB_2(\mu)}^{1/2}$, which does not reduce to the optimal one in the classical case, as we explain below. In our proof, the extra power comes from the estimate of the term $\Extras$ and does not seem to be avoidable in the nondoubling setting. We have decided to term $\tBB_p(\mu)$ the class of weights that appears in Theorem \ref{th:theoremB}, because the class of sets over which weights $w$ are tested in the definition of $[w]_{\BB_p(\mu)}$ makes it the exact discrete analogue of the Bekoll\'e-Bonami class of weights \cite{BB1978}, which is known to characterize the weighted bounds for the Bergman projection on the disk. In addition, our quantitative results are in line with their continuous counterpart from \cite{PR2013} |in the doubling setting|. The proof of Theorem \ref{th:theoremB} follows the standard argument that deduces weighted bounds from sparse domination (see, for instance, \cite{CMP2012}), that we adapt to deal with the new term $\Extras$ in our sparse domination result. We emphasize again that our scheme of proof for Theorems \ref{th:theoremA} and \ref{th:theoremB} has a wider applicability: we identify conditions under which an operator defined on a discrete structure that is equipped with a martingale filtration admits a sparse domination result and a family of nontrivial weighted inequalities. 

We finally study the endpoint behavior of $\Proj$. At this point, we prefer to formulate our results in two separate ways. First, we study the endpoint behavior of general integral operators $T$ whose kernels $K$ satisfy adequate H\"ormander-like and integral size conditions, that we term respectively \eqref{eq:HormCond} and \eqref{eq:sizeCond}. These should be compared to those naturally arising in nonhomogeneous Calder\'on-Zygmund theory \cite{To2001,To2001b}. Our results hold for any tree equipped with any measure, and can be thought of as a general \CZ theory for non-locally doubling trees. 

\begin{ltheorem} \label{th:theoremC}
    Let $(\Tr,\mu)$ be a tree equipped with any measure. Let $T$ be an $L^2$-bounded integral operator with symmetric kernel $K$. If $K$ satisfies conditions \eqref{eq:sizeCond} and \eqref{eq:HormCond}, then $T$ satisfies the following endpoint bounds:
    \begin{enumerate}
        \item[(a)] $T$ maps $L^1(\Tr)$ to $L^{1,\infty}(\Tr)$.
        \item[(b)] $T$ maps $\Hone(\Tr)$ to $L^1(\Tr)$.
        \item[(c)] $T$ maps $L^\infty(\Tr)$ to $\BMO(\Tr)$.
    \end{enumerate}
\end{ltheorem}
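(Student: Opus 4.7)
The plan is to follow the classical \CZ template, with the modifications dictated by the non-locally doubling nature of $\mu$. I would prove (a) directly via a \CZ decomposition adapted to $\Tr$, prove (b) through an atomic decomposition of $\Hone(\Tr)$, and deduce (c) by duality with (b) using that $K$ is symmetric.

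For (a), I would exploit the natural dyadic-type structure of $\Tr$: given $f \in L^1(\Tr)$ and $\lambda >0$, extract the maximal pairwise-disjoint family $\{Q_j\}$ of admissible sets on which the average of $|f|$ first exceeds $\lambda$, and write $f = g + \sum_j b_j$ with $b_j = (f-\avg{f}{Q_j})\one_{Q_j}$. The good part $g$ is treated by the $L^2$-boundedness of $T$ together with the standard estimate $\|g\|_2^2 \lesssim \lambda \|f\|_1$. For the bad part, I would split the level set of $T b_j$ into a near part, contained in a natural tree-enlargement of $Q_j$, and a far part. The near part is controlled by the integral size condition \eqref{eq:sizeCond}, which is designed precisely to play the role usually played by the doubling estimate $\mu(2Q)\lesssim \mu(Q)$; the far part is controlled using the cancellation $\int b_j \, d\mu = 0$ and the H\"ormander-type condition \eqref{eq:HormCond}. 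The main obstacle here is that, in the absence of local doubling, one cannot simply discard any enlargement of the $Q_j$'s: one has to verify that the stopping family $\{Q_j\}$ and its chosen neighborhoods are fully compatible with the geometry encoded in \eqref{eq:sizeCond} and \eqref{eq:HormCond}.

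For (b), I would fix the atomic description of $\Hone(\Tr)$ that is appropriate for our setting and show that $T$ maps atoms uniformly into $L^1(\Tr)$. For a typical atom $a$ supported on an admissible set $Q$ with $\int a\, d\mu = 0$ and $\|a\|_2 \le \mu(Q)^{-1/2}$, I would split $\int |Ta|\, d\mu$ according to whether the integration point lies in a natural enlargement of $Q$ or in its complement. The near contribution is estimated by Cauchy--Schwarz, the $L^2$-boundedness of $T$, and \eqref{eq:sizeCond}; the far one uses the mean-zero property of $a$ paired with \eqref{eq:HormCond}, exactly as in the classical atomic proof. Summing atomic decompositions yields the bound $\|Tf\|_{L^1(\Tr)} \lesssim \|f\|_{\Hone(\Tr)}$.

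For (c), I would obtain the $L^\infty \to \BMO$ bound from (b) by duality, which is available since $\BMO(\Tr)$ is defined as the dual of $\Hone(\Tr)$ in the relevant atomic sense and since the symmetry of $K$ makes $T$ formally self-adjoint. The main conceptual difficulty throughout is verifying that the H\"ormander and size hypotheses \eqref{eq:HormCond} and \eqref{eq:sizeCond} interact with the tree-adapted enlargements and complements exactly as needed to replace every invocation of the doubling property in a standard argument; once these conditions are formulated correctly, each of the three classical endpoint estimates should go through with only mild adjustments.
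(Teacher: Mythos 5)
Your overall template (CZ decomposition for (a), atoms for (b), duality for (c)) is reasonable, but at the two places where the non-locally-doubling nature of $\mu$ actually bites, your sketch relies on facts that fail. In (a) you take the classical bad part $b_j=(f-\avg{f}{Q_j})\one_{Q_j}$ and invoke ``the standard estimate $\|g\|_2^2\lesssim\lambda\|f\|_1$''. This is precisely what breaks down here: the stopping cubes $Q_j$ are maximal, so all one knows is $\avg{f}{Q_j}\le \frac{\mu(Q_j^{(1)})}{\mu(Q_j)}\,\avg{f}{Q_j^{(1)}}$, and since $\mu$ is not even locally doubling the ratio $\mu(Q_j^{(1)})/\mu(Q_j)$ is unbounded; hence $\sum_j\avg{f}{Q_j}^2\mu(Q_j)$ is not controlled by $\lambda\|f\|_1$. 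The paper's Lemma \ref{lem:CZnonhomogeneous} fixes this in the Tolsa style by subtracting the average over the \emph{parent}: $b_{Q_j}=f\one_{Q_j}-\avg{f\one_{Q_j}}{Q_j^{(1)}}\one_{Q_j^{(1)}}$, so that the constants entering $g$ are averages over sets that are not below the stopping level. The price is that $b_{Q_j}$ lives on $Q_j^{(1)}$ and its cancellation is only over $Q_j^{(1)}$, which forces the extra work on the annulus $Q_j^{(1)}\setminus Q_j$ (the terms $\alpha_j$, handled by \eqref{eq:sizeCond}, and $\beta_j$, handled by the $L^2$ bound). So the obstacle is not, as you say, the compatibility of the stopping family with enlargements (the exceptional set is just $\bigcup_j Q_j$, as in the classical case); it is the good-part estimate, which your decomposition does not deliver.

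A similar issue affects (b): the atoms you propose (supported on a single $Q\in\D$, mean zero on $Q$, $\|a\|_2\le\mu(Q)^{-1/2}$) are the classical ones, and in this non-doubling martingale setting they do not give an equivalent norm on $\Hone(\Tr)$. For instance, $f=\one_Q-\frac{\mu(Q)}{\mu(Q^{(1)})}\one_{Q^{(1)}}$ is a (multiple of a) simple atomic block with $\|f\|_{\Hone}\lesssim\mu(Q)$, while its best representation by classical atoms costs about $(\mu(Q)\mu(Q^{(1)}))^{1/2}\gg\mu(Q)$ when $\mu(Q)\ll\mu(Q^{(1)})$; hence uniform $L^1$ bounds on classical atoms do not imply the $\Hone\to L^1$ bound. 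The correct atomic description here is via atomic blocks in the sense of \cite{CoPa2016}, reduced to simple atomic blocks $b=a-\avg{a}{Q^{(1)}}\one_{Q^{(1)}}$ by Lemma \ref{lem:simpleAtoms}, and the proof must then estimate the non-cancellative pieces your sketch omits: $|\avg{a}{Q^{(1)}}|\,\|T\one_{Q^{(1)}}\|_{L^1(Q^{(1)})}$ via the $L^2$ bound and $\|Ta\|_{L^1(Q^{(1)})\setminus\mathrm{supp}(a))}$ via \eqref{eq:sizeCond}, with \eqref{eq:HormCond} used only outside $Q^{(1)}$ where $b$ has mean zero. Your duality argument for (c) is a legitimate alternative to the paper's direct proof (since $\mu$ is finite, $L^\infty\subseteq L^2$, the symmetric kernel makes $T$ self-adjoint, and $\Hone(\Tr)^*=\BMO(\Tr)$), but as stated it inherits the gap in (b).
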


The spaces $\BMO(\Tr)$ and $\Hone(\Tr)$ that appear in the statement of Theorem \ref{th:theoremC} are the martingale ones adapted to the natural filtration induced by the Gromov metric, and are introduced in Section \ref{sec1}. In particular, we have $\Hone(\Tr)^* = \BMO(\Tr)$ and interpolation with the $L^p(\Tr)$ scale. In our radial setting, the kernel of $\Proj$ satisfies \eqref{eq:sizeCond} and \eqref{eq:HormCond}, and so we can apply Theorem \ref{th:theoremC} to it and we obtain its $L^p$-boundedness as a consequence. We also recover the weak $L^1$-estimates obtained in \cite{dMMV2023,dMMR2023} in the homogeneous case and extend it to the unbounded branching setting. We expect that Theorem \ref{th:theoremC} will be applicable to other integral operators on |non-necessarily radial| trees with unbounded branching behavior. Our nondoubling results do not fit the framework of \cite{Hy2010}, because $\mu$ does not satisfy an appropriate growth condition. Some elements in the proof of Theorem \ref{th:theoremC} are reminiscent of the arguments that naturally appear in nonhomogeneous Calder\'on-Zygmund theory \cite{To2001}, adapted to our specific discrete setting.

Our study of $\Proj$ allows us to prove pointwise estimates for its kernel that resemble the classical Lipschitz smoothness ones for \CZ operators. This and the fact that $\Proj \one_\Tr = \one_\Tr$ suggests that the $\Hone$ and $\BMO$ endpoint estimates are suboptimal. 

\begin{ltheorem} \label{th:theoremD}
The following endpoint results hold for $\Proj$:
\begin{enumerate}
        \item[(a)] $\Proj$ maps $\Hone(\Tr)$ to $\Hone(\Tr)$.
        \item[(b)] $\Proj$ maps $\BMO(\Tr)$ to $\BMO(\Tr)$.
    \end{enumerate}    
\end{ltheorem}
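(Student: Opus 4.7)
\medskip

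\noindent\emph{Proof proposal.} The plan leans on three structural facts: $\Proj$ is self-adjoint in $L^2(\Tr)$ as an orthogonal projection; the duality $\Hone(\Tr)^\ast = \BMO(\Tr)$ is genuine for the martingale spaces adapted to the Gromov filtration described in Section \ref{sec1}; and $\Proj\one_\Tr = \one_\Tr$, so $\Proj$ preserves constants. By combining these, the two statements are essentially equivalent: if (b) is established, then for $g\in\Hone(\Tr)\cap L^2(\Tr)$ and $h\in\BMO(\Tr)\cap L^2(\Tr)$ one has
$$
|\langle\Proj g,h\rangle|=|\langle g,\Proj h\rangle|\lesssim\|g\|_{\Hone}\,\|\Proj h\|_{\BMO}\lesssim\|g\|_{\Hone}\,\|h\|_{\BMO},
$$
so $\Proj g$ induces a bounded functional on $\BMO(\Tr)$, and the density of atoms in $\Hone(\Tr)$ together with the a priori estimate $\|\Proj g\|_{L^1}\lesssim \|g\|_{\Hone}$ coming from Theorem \ref{th:theoremC}(b) upgrades the functional identification to genuine membership in $\Hone(\Tr)$. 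Thus I would target (b) directly.

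\medskip

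To prove (b), fix $f\in\BMO(\Tr)$ and a cube $Q$ of the Gromov filtration and bound the mean oscillation of $\Proj f$ on $Q$. Let $\widehat Q$ be the parent of $Q$ in the filtration and split
$$
f=\avg{f}{\widehat Q}+(f-\avg{f}{\widehat Q})\one_{\widehat Q}+(f-\avg{f}{\widehat Q})\one_{\Tr\setminus\widehat Q}=:c+f_1+f_2.
$$
The constant piece $\Proj c=c$ drops out of the oscillation on $Q$ by invariance of constants. The local piece $\Proj f_1$ is handled by Cauchy--Schwarz on $Q$ and the $L^2$-boundedness of $\Proj$: a martingale John--Nirenberg inequality for $\BMO(\Tr)$ gives $\|f_1\|_{L^2(\Tr)}\lesssim\mu(\widehat Q)^{1/2}\|f\|_{\BMO}$, and the comparability of $\mu(\widehat Q)$ and $\mu(Q)$ at the filtration level depends only on the combinatorial branching at the level of $Q$. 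For the nonlocal piece the Lipschitz-type kernel smoothness of $\Proj$ mentioned just before the theorem enters: for $x,x'\in Q$,
$$
|\Proj f_2(x)-\Proj f_2(x')|\le\int_{\Tr\setminus\widehat Q}|K(x,y)-K(x',y)|\,|f(y)-\avg{f}{\widehat Q}|\,d\mu(y),
$$
and I would split the outer integral along the ancestor chain $\anc{\widehat Q}{k}\setminus \anc{\widehat Q}{k-1}$, telescoping $f(y)-\avg{f}{\widehat Q}$ into the sum of martingale jumps $\avg{f}{\anc{\widehat Q}{k-1}}-\avg{f}{\anc{\widehat Q}{k}}$, each of size $\lesssim \|f\|_{\BMO}$, plus a remainder term on the last shell bounded by John--Nirenberg. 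The kernel smoothness then has to supply a geometric factor that sums in $k$ against the ancestor-shell masses.

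\medskip

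The main obstacle will be producing enough decay in the kernel difference to beat the growth of the ancestor shells in the nondoubling regime. In the classical doubling situation Lipschitz smoothness couples naturally with a doubling condition to yield a convergent series over shells; here the masses of consecutive ancestors can fluctuate irregularly, so the explicit smoothness estimate for the Bergman kernel has to carry strictly summable decay on its own. Verifying that the pointwise bound for $K$ developed in the operator-specific analysis of $\Proj$ is strong enough to close the telescoping argument under the radial hypothesis is the technical heart of the proof, and tightly depends on the parameter $\alpha>1$ governing the Bergman measure. A secondary, more routine issue is ensuring that $\widehat Q$ belongs to the filtration so that $\avg{f}{\widehat Q}$ is a genuine martingale conditional expectation and the $\BMO$ norm can be unpacked through the martingale John--Nirenberg inequality; the natural choice is the immediate predecessor of $Q$ in the Gromov hierarchy.
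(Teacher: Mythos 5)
Two steps in your treatment of (b) break down precisely in the nondoubling regime that the theorem is about. First, your local term: you localize at the parent, $f_1=(f-\avg{f}{Q^{(1)}})\one_{Q^{(1)}}$, bound $\|f_1\|_2\lesssim\mu(Q^{(1)})^{1/2}\|f\|_{\BMO}$ and then divide by $\mu(Q)^{1/2}$; the resulting constant is $(\mu(Q^{(1)})/\mu(Q))^{1/2}\sim \qf(x_Q^{(1)})^{\alpha/2}$, and since $\qf$ is unbounded this is not an admissible bound --- the ``comparability at the filtration level'' you invoke is exactly what fails here. The paper avoids this by subtracting $\avg{f}{Q^{(1)}}$ but localizing the good part at $Q$ itself (the martingale $\BMO$ norm, via John--Nirenberg, controls averages over $Q$ of $|f-\avg{f}{Q^{(1)}}|^2$ directly), and by pushing the shell $Q^{(1)}\setminus Q$ into the kernel estimates, whose crucial feature (Lemma~\ref{lem: KrestF}(2), Proposition~\ref{prop:Krest}) is a gain of $1/\qf$ that, together with radiality (all sibling sectors at a given level have equal measure), exactly compensates the $\sim\qf$ siblings in each annulus. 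Second, you only estimate mean oscillations, but the norm \eqref{eq:equivnorm} also contains the parent--child jump term $\sup_Q|\avg{\Proj f}{Q}-\avg{\Proj f}{Q^{(1)}}|$, which without doubling does \emph{not} follow from oscillation bounds (the trivial comparison again costs $\mu(Q^{(1)})/\mu(Q)$); in the paper this requires a separate four-term estimate, again hinging on the $1/\qf(x_Q^{(1)})$ gain on the shell. Finally, the step you yourself call the technical heart --- that the kernel differences supply decay summable against the ancestor-shell masses --- is left unverified, and it is exactly where the $\nu_j^k$ factors and the $1/\qf$ gains of Section~\ref{sec1} are needed; pairing a sup bound on the kernel difference with the raw mass of each shell does not close by itself.

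On the reduction of (a) to (b): the direction you propose can be salvaged, but not as written. An $L^1$ function that pairs boundedly against $\BMO$ need not lie in $\Hone$, and Theorem~\ref{th:theoremC}(b) does not upgrade this; the correct repair is to note that for $g\in\Hone\cap L^2$ one already has $\Proj g\in L^2\subseteq\Hone$ qualitatively (the measure is finite), then use that $\BMO=(\Hone)^*$ is norming, self-adjointness, and density of $\Hone\cap L^2$. The paper instead proves (a) directly and constructively: for a simple atomic block $b$ (Lemma~\ref{lem:simpleAtoms}) it writes $\Proj b=\Proj^{Q}a+\sum_k\Diff_{Q^{(k)}}b$ and shows each summand is an atomic block with geometrically decaying norm via Proposition~\ref{prop: estim D}, which yields the $\Hone$ bound without any duality. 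Either way, the substance of the theorem lies in the operator-specific kernel estimates and their summability against unbounded branching, which your proposal defers rather than supplies.
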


The $\BMO$ estimate for $\Proj$ is new even in the homogeneous setting, although the slightly weaker $L^\infty-\BMO$ one had already been noted explicitly and could be deduced from the known results of Calder\'on-Zygmund theory in homogeneous spaces. The scheme of proof of the $\Hone-\Hone$ estimate seems entirely new. We emphasize once more that our framework is significantly different from all those previously studied in nonhomogeneous harmonic analysis: the measure $\mu$ does not exhibit polynomial (or upper doubling) growth (so \cite{DoVa2023} cannot be followed for the BMO estimate), it is not balanced |in the sense of \cite{LSMP2012}|, and even if it clearly satisfies the isoperimetric property, it is not locally doubling, ruling out the techniques of \cite{CMM2009,CMM2010}.

The rest of the paper is organized as follows: in Section \ref{sec1} we explain our setting, the martingale filtration that we will be working with and some examples of operators that fit our framework. We also collect there some useful kernel estimates for $\Proj$ that we use later in the paper. These are deduced from a novel decomposition of the operator into localizing projections. Section \ref{sec2} contains our sparse domination result for $\Proj$ and the weighted inequalities that descend from it, in terms of the appropriate Bekoll\'e-Bonami-like weight classes. Finally, in Section \ref{sec4} we develop our discrete non-locally doubling \CZ theory and prove Theorems \ref{th:theoremC} and \ref{th:theoremD}.


\section{Trees with unbounded geometry} \label{sec1}

\subsection{Radial trees of (possibly) unbounded degree}

A tree $\Tr$ is a connected graph without loops. We denote the natural graph distance on $\Tr$ by $\graphdist$. Two vertices $x,y \in\Tr$ are  called adjacent if $\graphdist(x,y)=1$. We choose a vertex in $\Tr$ that we call origin and denote by $\orig$. For each $x\in \Tr$, we write
$$
|x|:=\graphdist(x,\orig).
$$
For each $x\in\Tr\setminus \{\orig\}$, we denote
$$
\qf(x)+1 = \#\{y \in \Tr: \; \graphdist(x,y)=1\},
$$
and we declare $\qf(\orig) = \#\{y \in \Tr: \; |y|=1\}$. In that way, $\qf(x)$ denotes the number of children of $x\in\Tr$, i.e. the vertices adjacent to $x$ whose distance from $0$ is $|x|+1$. As we stated in the Introduction, we work with radial trees. This means that we postulate the existence of a function $\qfint:\Z_+ \to \{2,3,4,\ldots\}$ such that
$$
\qf(x) = \qfint(|x|) \quad \mbox{for all} \; x \in \Tr.
$$
Given $\orig \neq x \in \Tr$ with $|x|=\ell$ and $1\leq k\leq \ell$, we denote by $x^{(k)}$ the only vertex $y$ in the geodesic merging $\orig$ and $x$ such that $|y|=\ell-k$. Conversely, the family of vertices $y$ such that $y^{(1)}=x$ is denoted $\sons{x}$. Fix $\alpha >1$. We consider the radially decreasing measure on $\Tr$ defined by 
$$
\mu(\{x\}) := \prod_{k=1}^{|x|} \frac{1}{\qf(x^{(k)})^\alpha}=\prod_{\ell=0}^{|x|-1} \frac{1}{\qfint(\ell)^\alpha}.
$$
Often, we abuse notation and write $\mu(x) = \mu(\{x\})$ for measures $\mu$ on $\Tr$. Without loss of generality, we may assume that $\mu$ has been normalized to be a probability measure. The sector generated by $x\in \Tr$ is the set
$$
\Sec_x = \left\{y \in \Tr: y^{(k)}=x \quad \mbox{for some }k\geq 1 \right\}\cup \{x\}.
$$
Due to the fast decay of $\mu$, we have, and will repeatedly use, the fact that $\mu(\Sec_x) \sim \mu(x)$. The point $x$ is called the vertex of the sector $\Sec_x$. Given a sector $S$, we will sometimes use the notation $x_S$ to denote the vertex of $S$. Given $x,y\in\Tr$, their confluent $x \wedge y$ is the point $z$ that belongs both to the chain $(x^{(k)})_k$ and $(y^{(\ell)})_\ell$ such that $|z|$ is largest. 

\begin{remark}
An alternative construction of a tree can be carried over without distinguishing an origin $\orig$, by hanging $\Tr$ by (a point in) its boundary $\partial\Tr$. $\partial\Tr$ can be constructed as follows: let $\mathcal{G}(\Tr)$ be the space of infinite geodesics |infinite sequences $(x_n)_{n}$ of adjacent points such that $x_{n+2}\neq x_n$ for every $n$|. On $\mathcal{G}(\Tr)$, we define the equivalence relation
$(x_n)_{n}\sim(y_m)_{m}$ if and only if there exists $N\in\Z$ and $n_0\in\N$ such that $x_n=y_{n+N}$ for every $n>n_0$. Then 
$$
\partial\Tr:=\mathcal G(\Tr)/\sim.
$$
Once we fix a hanging point $\omega\in\partial\Tr$, we can define a level structure and then a non-finite, $\sigma$-finite measure adapted to it on $\Tr$. We proceed along the lines of \cite{dMMR2023}, but we need to make some modifications because our tree is no longer homogeneous, and the property of being radial is (can be) conditional on the choice of the boundary point over which $\Tr$ hangs. Indeed, assume $\omega \in \partial\Tr$ has been picked and for each $y\in \Tr$ denote by $(y_n)_n$ the (unique) representative of $\omega$ such that $y_0=y$. We then set $y^{(n)}=y_n$ for all $y$. Fix $x_0\in \Tr$. We proclaim that $x_0$ belongs to the $0$-th horocycle and $x_n$ belongs to the $(-n)$-th horocycle, for $n \geq 1$. All other points $y$ are assigned a horocycle as follows: if $n_1$ and $n_2$ are the smallest numbers such that $y^{(n_1)} = x^{(n_2)}$ then $y$ belongs to the $(n_1-n_2)$-th horocycle. Having fixed $\omega$, $\Tr$ is radial if for each $k\in \mathbb{Z}$, the following holds: for all $y$ in the $k$-th horocycle the set $\{z \in \Tr: z^{(1)}=y\}$ has the same number $q(y)=q_k$ of elements. Then we may define a measure $\mu$ on $\Tr$ as follows: we set $\mu(\{x_0\})=1$, and then we impose the relation
$$
\mu(\{y\})=\frac{\mu(\{y^{(1)}\})}{q(y^{(1)})^\alpha},
$$
which defines $\mu$ over the whole $\Tr$. 

The measure $\mu$ is not finite, but all sectors have finite measure and are very similar to the trees we consider below. In fact, any tree with origin can be seen as a sector in a non-finite measure tree as the ones constructed above. All of our results below can be generalized to radial trees of infinite measure writing $\Tr$ as an increasing limit of sectors. We leave the concrete details and adjustments needed in each estimate to the interested reader. 
\end{remark}

\subsection{Martingale structure. $\Hone$ and $\BMO$} We construct a (one-sided) martingale filtration induced by the Gromov metric in $\Tr$. Each $\sigma$-algebra is generated by a partition $\mathscr{D}_k$, which is trivial for $k=0$ and is constructed as follows for $k\geq 1$:
$$
\D_k = \Big\{ \{x\}: |x| < k\Big\} \bigcup \Big\{ S_y: |y| = k \Big\}.
$$
We use notation inherited from that of the usual dyadic system: we use the letters $Q,R,S$ to denote elements of $\D$, and the only $S\in \D_k$ that contains a given $Q\in \D_{k+s}$ is denoted by $Q^{(s)}$. When $Q=\{x\}$ is a singleton, we sometimes write $x=x_Q$. Clearly, if $\{x\} \in \D_k$ for some $k$, then $\{x\} \in \D_\ell$ for all $\ell > k$. It will be useful for us to always view singletons as points elements in the dyadic generation $\D_\ell$ with the smallest possible value of $\ell$. This means that we will always think that $\{x\}\in \D_k$ if 
$$
\min\left\{\ell: \{x\} \in \D_\ell\right\} = k.
$$
Also, when needed, we will denote
$$
\D_t(Q) = \{R \in \D: R^{(t)} =Q\},
$$
and
$$
\D_{\leq t}(Q) = \bigcup_{k=0}^t \D_k(Q).
$$
The conditional expectations induced by $\D=\{\D_k\}_{k\geq 0}$ are denoted
$$
\cexp_{\D_k} f = \cexp_k f := \sum_{Q \in \D_k} \avg{f}{Q} \one_Q,
$$
while martingale differences are
$$
  \martdiff_k f = \cexp_k f - \cexp_{k-1} f.
$$
In our endpoint estimates we use the spaces $\Hone(\Tr)$ and $\BMO(\Tr)$ defined with respect to the filtration generated by $\D$. The norm in the latter is given by 
$$
\|f\|_{\BMO} = \|\cexp_0 f\|_\infty + \sup_{k\geq 1} \left\| \cexp_k \left| f - \cexp_{k-1}f \right|\right\|_{\infty},
$$
We will use the following useful expression to compute $\BMO$ norms in the sequel:
\begin{equation}\label{eq:equivnorm}
    \|f\|_{\BMO}\sim \sup_{Q\in\D}\frac{1}{\mu(Q)} \sum_{x\in Q} |f(x)-\avg{f}{ Q}|\mu(x)+\sup_{\substack{Q\in\D\\Q\ne\Tr}} |\avg{f}{Q}-\avg{f}{Q^{(1)}}|
    +\left|\sum_{x\in\Tr}f(x)\mu(x)\right|.
\end{equation}
The norm in $\Hone(\Tr)$ is defined via the martingale square function:
$$
\|f\|_{\Hone} = \left\|\left(\sum_{k\geq1}\left|\martdiff_k f \right|^2\right)^{\frac12} \right\|_1.
$$
It will be useful to work with atomic descriptions of $\Hone(\Tr)$. In what follows, an algebraic atom is a function $b\in L^1(\Tr)$ which is supported on some $Q^{(1)}\in \D_k$ for some $k$ |which cannot be a single point| and has vanishing integral. Following \cite{CoPa2016}, an algebraic atom is an atomic block if 
$b=\sum_i \lambda_i a_i$, where $\supp(a_i)\subseteq Q_i$ for some $Q_i\in\D_{k_i}$, $k_i\ge k$ and
\begin{equation*}
\|a_i\|_\infty\le\mu(Q_i)^{-1}\frac{1}{k_i-k+1}.
\end{equation*}
Every $a_i$ is said to be a subatom. For every atomic block we define
\begin{equation*}
|b|^1=\inf_{\substack{b=\sum_i\lambda_i a_i\\a_i \,\rm{ subatoms}}} \sum_{i=1}^{\infty}|\lambda_i|.
\end{equation*}
Then $f\in \Hone(\Tr)$ if and only if $f=\sum_j b_j$, where each $b_j$ is an atomic block, and 
\begin{equation*}
    \|f\|_{\Hone}=\inf_{\substack{f=\sum_j b_j\\b_j \,\text{atomic}\,\text{block}}} \sum_{j=1}^{\infty}|b_j|^1
    =\inf_{\substack{f=\sum_jb_j\\b_j=\sum_i \lambda_{j,i} a_i}} \sum_{j,i=1}^{\infty}|\lambda_{j,i}|.
\end{equation*}
Atomic blocks can be assumed to adopt a simpler form than those, as we show next. An algebraic atom $s$ is called a simple algebraic atom if there exists $a:\Tr \to \R$ such that $\supp(a) \subseteq Q$ and 
$$
s = a - \avg{a}{Q^{(1)}}\one_{Q^{(1)}}.
$$

\begin{lemma}\label{lem:simpleAtoms}
Every function $f\in \Hone(\Tr)$ can be written as
$$
f = \sum_j \lambda_j s_j, 
$$
where each simple atomic block $s_j = a_j - \avg{a_j}{Q_j^{(1)}}\one_{Q_j^{(1)}}$ is such that
$$
\|a_j\|_\infty \leq \mu(Q_j)^{-1}.
$$
Moreover, we have
$$
\|f\|_{\Hone} \sim \inf_{\substack{f=\sum_j\lambda_j s_j\\s_j \,\text{simple} \, \text{atomic}\,\text{block}}} \sum_{j=1}^{\infty}|\lambda_j| =: \|f\|_{\Hone_{\mathrm{sb}}}.
$$
\end{lemma}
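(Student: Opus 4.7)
The plan is to establish the two-sided inequality $\|f\|_{\Hone}\sim\|f\|_{\Hone_{\mathrm{sb}}}$ by direct comparison with the atomic block definition. The bound $\|f\|_{\Hone}\lesssim\|f\|_{\Hone_{\mathrm{sb}}}$ is the easy direction: every simple atomic block $s=a-\avg{a}{Q^{(1)}}\one_{Q^{(1)}}$ with $\supp a\subseteq Q\in\D_{k+1}$ and $\|a\|_\infty\le\mu(Q)^{-1}$ is itself an atomic block with $|s|^1$ controlled by an absolute constant. First I would present $s$ as a sum of two subatoms: $2\cdot(a/2)$, supported on $Q_1=Q\in\D_{k+1}$ (so $k_1-k+1=2$ and $\|a/2\|_\infty\le\mu(Q_1)^{-1}/2$); and $1\cdot\bigl(-\avg{a}{Q^{(1)}}\one_{Q^{(1)}}\bigr)$, supported on $Q_2=Q^{(1)}\in\D_k$, whose sup-norm is bounded by $\mu(Q^{(1)})^{-1}$ by H\"older. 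This gives $|s|^1\le 3$, and summing over any simple atomic decomposition of $f$ yields the desired estimate.

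For the converse $\|f\|_{\Hone_{\mathrm{sb}}}\lesssim\|f\|_{\Hone}$, I would decompose each subatom of an arbitrary atomic block via a martingale telescoping along its chain of dyadic ancestors. Fix an atomic block $b=\sum_i\lambda_i a_i$ supported on $Q^{(1)}\in\D_k$ with $\supp a_i\subseteq Q_i\in\D_{k_i}$ and $\|a_i\|_\infty\le\mu(Q_i)^{-1}/(k_i-k+1)$. Writing $E_j a_i:=\avg{a_i}{Q_i^{(j)}}\one_{Q_i^{(j)}}$, the telescoping identity
\begin{equation*}
a_i=(a_i-E_1 a_i)+\sum_{j=1}^{k_i-k-1}(E_j a_i-E_{j+1}a_i)+E_{k_i-k}a_i
\end{equation*}
produces $k_i-k$ differences, each of which turns out to be exactly $\frac{1}{k_i-k+1}$ times a bona fide simple atomic block. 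The key observation is that $\avg{a_i}{Q_i^{(j)}}\mu(Q_i^{(j)})=\int a_i$ for every $j$ (since $\supp a_i\subseteq Q_i$), which forces $\avg{E_j a_i}{Q_i^{(j+1)}}=\avg{a_i}{Q_i^{(j+1)}}$ and hence $E_j a_i-E_{j+1}a_i=E_j a_i-\avg{E_j a_i}{Q_i^{(j+1)}}\one_{Q_i^{(j+1)}}$; the normalizing factor $k_i-k+1$ is then exactly what converts the subatom sup-norm bound on $a_i$ into the required bound $\mu(Q_i^{(j)})^{-1}$ for the ``$a$'' of the simple atomic block.

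The main obstacle is the tail term $E_{k_i-k}a_i=\avg{a_i}{Q^{(1)}}\one_{Q^{(1)}}$, which is constant on $Q^{(1)}$ and thus \emph{not} a simple atomic block. This term is handled by aggregating over $i$ via the defining mean-zero property of $b$:
\begin{equation*}
\sum_i\lambda_i E_{k_i-k}a_i=\frac{\one_{Q^{(1)}}}{\mu(Q^{(1)})}\sum_i\lambda_i\int a_i=\frac{\one_{Q^{(1)}}}{\mu(Q^{(1)})}\int b=0,
\end{equation*}
so the tails cancel globally. Collecting the surviving pieces, $b$ is expressed as a sum of simple atomic blocks with total coefficient bounded by $\sum_i|\lambda_i|\frac{k_i-k}{k_i-k+1}\le\sum_i|\lambda_i|$. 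Summing over a decomposition $f=\sum_l b_l$ into atomic blocks and passing to the infimum yields $\|f\|_{\Hone_{\mathrm{sb}}}\lesssim\|f\|_{\Hone}$ and finishes the proof.
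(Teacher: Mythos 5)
Your proposal is correct and follows essentially the same route as the paper's proof: the easy direction by writing a simple atomic block as two admissible subatoms, and the converse by telescoping each subatom along its chain of ancestors, using the factor $k_i-k+1$ to renormalize the averaged pieces into simple atomic blocks and the vanishing mean of the atomic block to cancel the constant tail terms, yielding the same coefficient bound $\sum_i|\lambda_i|$.
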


\begin{proof}
Clearly, $\|f\|_{\Hone} \lesssim \|f\|_{\Hone_{\mathrm{sb}}}$, for simple atomic blocks are atomic blocks. Indeed, if $b=a - \avg{a}{Q}\one_{Q}$ is a simple atomic block then $a_1 = a$ and $a_2= - \avg{a}{Q}\one_{Q}$ are subatoms satisfying the right size condition. Therefore, the infimum defining $\|\cdot\|_{\Hone}$ ranges over a larger set than that defining $\|\cdot\|_{\Hone_{\mathrm{sb}}}$.

Conversely, let $b$ be an atomic block supported on $Q\in\D_k$ with $|b|^1 = 1$. It is enough to show that $\|b\|_{\Hone_{\mathrm{sb}}} \lesssim 1$. Let $b=\sum_j \lambda_j a_j$ be a decomposition of $b$ into subatoms such that $\sum_j |\lambda_j| \sim 1$. For each $j$, denote the support of $a_j$ by $Q_j \in \D_{k_j}$ and write
$$
a_j = a_j-\avg{a_j}{Q_j^{(1)}}\one_{Q_j^{(1)}} + \sum_{\ell=1}^{k_j-k-1}\left[\avg{a_j}{Q_j^{(\ell)}}\one_{Q_j^{(\ell)}}-\avg{a_j}{Q_j^{(\ell+1)}}\one_{Q_j^{(\ell+1)}} \right] + \avg{a_j}{Q}\one_{Q}.
$$
Since
$$
\sum_j \lambda_j \avg{a_j}{Q} = 0 
$$
because of the mean $0$ of $b$, we can write
\begin{align*}
b =& \sum_j \lambda_j \left[ a_j-\avg{a_j}{Q_j^{(1)}}\one_{Q_j^{(1)}}\right] \\
& + \sum_j \lambda_j \frac{1}{k_j-k+1}\sum_{\ell=1}^{k_j-k-1}(k_j-k+1)\left[\avg{a_j}{Q_j^{(\ell)}}\one_{Q_j^{(\ell)}}-\avg{a_j}{Q_j^{(\ell+1)}}\one_{Q_j^{(\ell+1)}} \right],
\end{align*}
which we claim is a decomposition of $b$ into simple atomic blocks. Indeed, for each $j$ and $1\leq \ell \leq k_j-k+1$,
$$
\left\|(k_j-k+1)\avg{a_j}{Q_j^{(\ell)}}\one_{Q_j^{(\ell)}}\right\|_\infty \leq \frac{(k_j-k+1)\mu(Q_j)}{\mu(Q_j^{(\ell)})}\|a_j\|_\infty \leq \frac{1}{\mu(Q_j^{(\ell)})},
$$
as required. An entirely similar estimate holds for the terms of the form $a_j-\avg{a_j}{Q_j^{(1)}}\one_{Q_j^{(1)}}$. Therefore, 
$$
\|b\|_{\Hone_{\mathrm{sb}}} \lesssim \sum_j |\lambda_j|\left(1+\sum_{\ell=1}^{k_j-k-1}\frac{1}{k_j-k+1}\right) \lesssim \sum_j |\lambda_j| \sim 1,
$$
which concludes the proof.
\end{proof}

\begin{remark}
    The proof of Lemma \ref{lem:simpleAtoms} can clearly be done in any probability space, and not only in one whose associated filtration is atomic. We do not need that level of generality here and so we omit the details. 
\end{remark}

\begin{remark}
    Above, we have dealt with atoms satisfying an $L^\infty$ size condition. Equivalent norms and notions of both atomic blocks and simple atomic blocks can be constructed for each $1<p<\infty$ replacing said size condition by
    $$
    \|a_j\|_p \leq \frac{1}{\mu(Q_j)^{\frac{1}{p'}}}.
    $$
    Lemma \ref{lem:simpleAtoms} holds as well for $L^p$-atomic blocks. We use the case $p=2$ below in Section \ref{sec4}.
\end{remark}

\subsection{Operators acting on $\Tr$: Bergman spaces and projection-like operators} The combinatorial Laplacian on $\Tr$ is the operator defined by
$$
\Delta f (x) = f(x) - \frac{1}{q(x)+1}\sum_{y:\graphdist(x,y)=1} f(y).
$$
A function $f$ is called harmonic at a point $x$ if $\Delta f(x) = 0$, and is called harmonic if it is harmonic at all $x\in\Tr$. The (real) Bergman space is
$$
\Berg^2(\Tr) = \left\{f\in L^2(\Tr): \; f \mbox{ is harmonic} \right\}.
$$
This is a real-variable version of the classical Bergman space. $\Berg^2(\Tr)$ is a reproducing kernel Hilbert space for which an explicit orthonormal basis can be exhibited. 
To do so, we start defining a family of auxiliary functions. For each $k\geq0$, we denote by $\RH_k$ the only function on $\Tr$ such that $\RH_k(x)=0$ if $|x|<k$, $\RH_k(x)=1$ if $|x|=k$, that is harmonic at all points $x$ such that $|x|>k$, and is radial: $\RH_k(x)=\RH_k(y)$ if $|x|=|y|$. The function $\RH_k$ encodes the decay properties of the kernels studied below, in terms of the following quantity:
$$
\nu_j^k := \prod_{\ell=j}^{k} \frac{1}{\qfint(\ell)},\quad k\ge j
$$
with the convention that $\nu_j^k=1$ if $j > k$. Many times, the trivial estimate $\nu_j^k \le~2^{-(k-j+1)}$ will be enough for our purposes, but the unboundedness of $q$ will often force us to use the larger decay captured by $\nu_j^k$. We will sometimes use the multiplicative identity $ \nu_j^k= \nu_j^m \cdot \nu_{m+1}^k$ for $j\leq m\leq k$. The following explicit formula, which can be checked by induction, allows one to compute the values of $\RH_k$:
\begin{equation}\label{eq:explicitRH}
\RH_k(x) = \sum_{\ell=1}^{|x|-k+1} \nu_{k}^{|x|-\ell},\quad |x|\geq k.    
\end{equation}
Formula \eqref{eq:explicitRH} immediately implies that $1\leq\RH_k(x) \leq 2$ for all $x\in\supp(\RH_k)$, a fact that we will use repeatedly. The following more precise estimate will be useful in the sequel: 
\begin{lemma}\label{lem: RH}
Let $k\geq 0$. If $x,y$ are such that $|x|\geq|y|=m \geq k$, then 
\[ 
\RH_k(x)-\RH_k(y)\lesssim \nu_k^{m}.
\]
\end{lemma}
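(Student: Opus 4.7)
The plan is to exploit the explicit formula \eqref{eq:explicitRH} for $\RH_k$ directly. Both $\RH_k(x)$ and $\RH_k(y)$ are partial sums of the same series in the $\nu_k^j$ variables, so their difference should collapse to a tail, and the first term of that tail should already be essentially $\nu_k^m$. This matches what the lemma claims.

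Concretely, I would first reindex \eqref{eq:explicitRH} by setting $j = |x|-\ell$ to rewrite
\[
\RH_k(x)=\sum_{j=k-1}^{|x|-1}\nu_k^{j},\qquad \RH_k(y)=\sum_{j=k-1}^{m-1}\nu_k^{j},
\]
with the convention $\nu_k^{k-1}=1$. Since $m\le |x|$, the second sum is an initial segment of the first, and subtracting yields $\RH_k(x)-\RH_k(y)=\sum_{j=m}^{|x|-1}\nu_k^{j}$. I would then use the multiplicative identity $\nu_k^j=\nu_k^m\cdot \nu_{m+1}^j$ (with $\nu_{m+1}^m=1$) to factor out $\nu_k^m$, reducing the task to bounding the residual sum $\sum_{j=m}^{|x|-1}\nu_{m+1}^{j}$. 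The hypothesis $\qfint\ge 2$ gives $\nu_{m+1}^j\le 2^{-(j-m)}$, so the residual is dominated by $\sum_{i\ge 0}2^{-i}=2$. Combining these yields $\RH_k(x)-\RH_k(y)\le 2\,\nu_k^m$, which is the claim.

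There is no real obstacle: the argument is a short manipulation of the closed-form expression. The only points to keep track of are the conventions $\nu_j^k=1$ when $j>k$ and the boundary case $|x|=m$, where the tail sum is empty and the bound is trivial. It is worth noting that if one replaced the trivial estimate $\nu_{m+1}^j\le 2^{-(j-m)}$ by keeping $\nu_{m+1}^j$ in the sum, one would obtain the slightly sharper bound $\RH_k(x)-\RH_k(y)\le \nu_k^m(1+\RH_{m+1}(x))\le 3\nu_k^m$, but this refinement is not needed for the statement.
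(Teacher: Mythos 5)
Your proposal is correct and follows essentially the same route as the paper: both use the explicit formula \eqref{eq:explicitRH}, cancel the common initial segment so the difference becomes the tail $\sum_{j=m}^{|x|-1}\nu_k^j$, factor out $\nu_k^m$ via the multiplicative identity, and bound the remaining geometric sum using $\qfint\ge 2$. Your reindexing is just a cleaner bookkeeping of the paper's split of the sum, and the boundary-case remark is a harmless addition.
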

\begin{proof}
Indeed,
\begin{align*}
\RH_k(x)-\RH_k(y) & = \sum_{\ell=1}^{|x|-k+1} \nu_{k}^{|x|-\ell}-\sum_{\ell=1}^{|y|-k+1} \nu_{k}^{|y|-\ell} \\
& = \sum_{\ell=|x|-|y|+1}^{|x|-k+1} \nu_{k}^{|x|-\ell} - \sum_{\ell=1}^{|y|-k+1} \nu_{k}^{|y|-\ell} + \sum_{\ell=1}^{|x|-|y|} \nu_{k}^{|x|-\ell} \\
& = \nu_k^{m} \sum_{\ell=1}^{|x|-|y|} \nu_{m+1}^{|x|-\ell} \lesssim \nu_k^m,\\
\end{align*}
using $\nu_{m+1}^{|x|-\ell} \lesssim 2^{-|x|+\ell+m}$.
\end{proof}

Given a sector $Q\in\D_{k-1}$ and a function $e:Q \cap\{|x|=k\}\to\R$ with mean zero, the function
\begin{equation*}
    E_Q(e)(x)=\begin{dcases*}
        \RH_k(x)e(y)&$x\in S_y$ for some $y\in\D_1(x_Q)$,\\
        0&otherwise,
    \end{dcases*}
\end{equation*}
is harmonic on $\Tr$ and we call it the radial harmonic extension of $e$. For each $x\in\Tr$, denote $Q=\Sec_x$ and pick an orthonormal basis $\{e^\ell\}_{\ell=1}^{q(x)-1}$ of the hyperplane
$$
W_Q = \left\{v=(v_j) \in \R^{q(x)}: \sum_j v_j=0 \right\}.
$$
Let $\sons{x}=\{y_j:j=1,\dots,q(x)\}$. For each $\ell$, define the function $e_Q^\ell$ as the one supported in the successors of $x$ that satisfies $e_Q^\ell(y_j)=e_j^\ell$. Finally, for each $\ell$ and $Q$ set
$$
\h_Q^\ell(z) = \frac{E_Q(e_Q^\ell)(z)}{\|E_Q(e_Q^\ell)\|_2}, \quad z \in \Tr.
$$
When $Q\in\D$ is a singleton, by convention we define $h_Q\equiv0$. The next result uses a key property of the functions $\h_Q^\ell$: they are not only orthogonal to constants, but moreover they are orthogonal to all radial functions (and they themselves behave like radial functions in almost all sectors).

\begin{prop}\label{prop:constructionofBasis}
The family 
$$
\mathcal{U}=\left\{\h_Q^\ell:\;\;Q\in\D,1 \leq\ell \leq q(x_Q)-1 \right\} \cup \left\{\one_{\Tr}\right\}
$$ is an orthonormal basis of $\Berg^2(\Tr)$.  
\end{prop}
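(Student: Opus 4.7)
The plan is to establish orthonormality and completeness separately.

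\textbf{Orthonormality.} I would start by computing $\|E_Q(e)\|_2^2$. By radiality of $\Tr$, for $y_0$ with $|y_0|=k-1$ and any $y' \in \sons{y_0}$, the quantity $B_k := \sum_{z\in S_{y'}} \RH_k(z)^2 \mu(z)$ depends only on $k$ and not on $y'$. Consequently $\|E_Q(e)\|_2^2 = B_{|x_Q|+1}\|e\|_{\ell^2(\sons{x_Q})}^2$, yielding orthonormality within a fixed $Q$ directly from the orthonormality of $\{e_Q^\ell\}_\ell$. For orthogonality across different sectors, the disjoint case is immediate; if $Q \subsetneq Q'$, then $Q \subseteq S_{y'_*}$ for a unique $y'_* \in \sons{x_{Q'}}$, and $h_{Q'}^{\ell'}|_Q$ is a constant multiple of $\RH_{|x_{Q'}|+1}$. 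Expanding $\langle h_Q^\ell, h_{Q'}^{\ell'}\rangle$ and using radiality to factor out a $y'$-independent inner integral leaves a factor $\sum_{y' \in \sons{x_Q}} e_Q^\ell(y')$, which vanishes since $e_Q^\ell$ has mean zero. The same idea gives $\langle h_Q^\ell, \one_\Tr \rangle = 0$.

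\textbf{Completeness.} The strategy is to show that any $f \in \Berg^2$ with $f \perp \mathcal{U}$ is identically zero. The orthogonality $f \perp \one_\Tr$ yields $\int f\,d\mu = 0$. The orthogonality $f \perp h_Q^\ell$ for all $\ell$ and $Q = S_{y_0}$ is equivalent, by the same expansion as above, to the assertion that
\[
F_k(y') := \sum_{z \in S_{y'}} f(z) \RH_k(z) \mu(z), \qquad |y'|=k,
\]
is constant as $y'$ ranges over $\sons{y_0}$. Next I would invoke an auxiliary mean value identity: for any harmonic $f \in \Berg^2$, summing the harmonicity equation over each horocycle yields $\avg{f}{\Gamma_m} = f(\orig)$ for all $m \geq 0$, and integrating gives $\int f\,d\mu = f(\orig)$, hence $f(\orig) = 0$. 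Combining harmonicity of $f$ with the sibling-constancy of $F_k$ at every level, I would inductively deduce that $f$ is constant on every horocycle $\Gamma_k$, hence radial; the recursion $\phi(k+1) - \phi(k) = \qfint(k)^{-1}(\phi(k)-\phi(k-1))$ (for $k \geq 1$) together with the boundary identity $\phi(0)=\phi(1)$ (from harmonicity at $\orig$) forces $\phi$ constant, and then $f(\orig) = 0$ yields $f \equiv 0$.

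\textbf{The main obstacle} is passing from the integral constancy of $F_k$ across siblings to the pointwise equality $f(y') = f(y'')$ for siblings. A useful tool is the recursion
\[
F_k(y) = \int_{S_y} f\,d\mu + \frac{1}{\qfint(k)}\sum_{y' \in \sons{y}} F_{k+1}(y'),
\]
which follows from the identity $\RH_k(z) = 1 + \qfint(k)^{-1}\RH_{k+1}(z)$ valid for $|z| \geq k$ (an easy consequence of formula \eqref{eq:explicitRH}). Combining this recursion with the sibling-constancy conditions at consecutive levels, the horocycle mean value identity applied inside each sector, and harmonicity of $f$, one can isolate the leading contribution $f(y')\mu(y')$ to $F_k(y')$ (using $\RH_k(y')=1$) and extract the pointwise identity on siblings by induction on the level.
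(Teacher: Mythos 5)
Your orthonormality argument is essentially the paper's: radiality lets you factor the inner sums over the sectors $\Sec_{y'}$, $y'\in\sons{x_Q}$, out as level-dependent constants, and the mean-zero property of $e_Q^\ell$ kills what remains; the same works against $\one_\Tr$. For completeness you take a genuinely different route. The paper proves $f\equiv 0$ by induction on the level: once $f$ vanishes up to level $n-1$, harmonicity at a vertex $x$ with $|x|=n-1$ gives $\sum_{y\in\sons{x}}f(y)=0$, and (because $f(x)=0$) the spherical sums $\sum_{z\in \Sec_y,\,|z|=n-1+j}f(z)\RH_n(z)\mu(z)$ are multiples of $f(y)$, so orthogonality to the $h_{\Sec_x}^\ell$ forces $f$ to vanish on $\sons{x}$. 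You instead aim to show $f$ is radial and then run the radial recursion; the endgame ($\phi(k+1)-\phi(k)=\qfint(k)^{-1}(\phi(k)-\phi(k-1))$, $\phi(0)=\phi(1)$, $f(\orig)=0$) and the horocycle identity giving $f(\orig)=0$ are fine.

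The gap is exactly the step you flag as the main obstacle, and your sketch does not close it. Orthogonality to the $h_Q^\ell$ only tells you that $F_k(y')=\sum_{z\in \Sec_{y'}}f(z)\RH_k(z)\mu(z)$ is constant over siblings, and the tools you list do not visibly produce $f(y')=f(y'')$; in particular, the ``horocycle mean value identity applied inside each sector'' is not of the form you would need: inside $\Sec_{y'}$ the spherical sums $A_j(y')=\sum_{z\in \Sec_{y'},\,|z|=k+j}f(z)$ are \emph{not} multiples of $f(y')$, since harmonicity gives $A_1(y')=(\qfint(k)+1)f(y')-f(y_0)$ and $(\qfint(k+j)+1)A_j=A_{j-1}+A_{j+1}$, so the parent value $f(y_0)$ enters throughout. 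The missing idea is to use precisely this recursion to write $F_k(y')=P_k\,f(y')+R_k\,f(y_0)$ with constants depending only on the level, and to verify the non-degeneracy $P_k>0$ (the coefficients of $f(y')$ in the $A_j$ are positive and increasing because $\qfint\ge 2$, and $1\le\RH_k\le 2$). Since $f(y_0)$ is common to all siblings, constancy of $F_k$ over $\sons{y_0}$ then immediately yields $f(y')=f(y'')$, after which your induction (using harmonicity at level $m$ to propagate radiality to level $m+1$) and the radial recursion do conclude. Without identifying this two-term decomposition and, crucially, checking that the coefficient multiplying $f(y')$ does not vanish, the passage from the integral information $F_k(y')=F_k(y'')$ to the pointwise one is unproven, so the completeness half is incomplete as written. (The paper's inductive route sidesteps the two-term decomposition because the parent value has already been shown to vanish, though it too relies implicitly on the positivity of the analogous constant $C_n$.)
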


\begin{proof}
The functions under consideration are $L^2$-normalized, so we have to check orthogonality and completeness. First, if $Q\in\D_{k-1}$ and $\ell\neq\ell'$ then
\begin{align*}
    \langle \h_Q^\ell, \h_Q^{\ell'} \rangle & = \sum_{x\in \sons{x_Q}} \sum_{y\in\Sec_x} e_Q^\ell(x)e_Q^{\ell'}(x) \RH_k(y)^2 \mu(y) \\
    & = \sum_{x\in \sons{x_Q}}e_Q^\ell(x)e_Q^{\ell'}(x)\sum_{y\in\Sec_x}\RH_k(y)^2 \mu(y) \\
    & = C_{k,Q} \sum_{x\in \sons{x_Q}}e_Q^\ell(x)e_Q^{\ell'}(x)=C_{k,Q} \sum_{j=1}^{q(x_Q)} e_j^\ell e_j^{\ell'}=0,
\end{align*}
by the orthogonality of the vectors that generate $\h_Q^\ell$ and $\h_Q^{\ell'}$. Second, since $\supp(\h_Q^\ell) \subseteq Q$ for all $Q$, the following |similar| computation for $\D_{r-1} \ni Q \subsetneq R \in \D_{k-1}$ deals with all the remaining cases:
\begin{align*}
\langle \h_Q^\ell, \h_R^{\ell'} \rangle & = \sum_{x\in \sons{x_Q}} \sum_{y\in\Sec_x} e_Q^\ell(x)e_R^{\ell'}(x_0) \RH_r(y)\RH_k(y) \mu(y) \\ 
& = e_R^{\ell'}(x_0) \sum_{x\in \sons{x_Q}} e_Q^\ell(x) \sum_{y\in\Sec_x}\RH_r(y)\RH_k(y) \mu(y)\\
& = C_{k,r,Q} e_R^{\ell'}(x_0) \sum_{x\in \sons{x_Q}} e_Q^\ell(x) = 0.
\end{align*}
Above, the point $x_0\in\sons{x_R}$ is the only one such that $Q\subseteq S_{x_0}$. Finally, all functions $h_Q^\ell$ have mean zero so they are orthogonal to constants. 

It remains to show that $\mathcal{U}$ generates the whole space. Suppose $f\in\Berg^2(\Tr)$ is orthogonal to every element of $\mathcal{U}$. In particular, $f$ is orthogonal to constants and it can then be checked that $f(\orig)=0$. We assume by induction that $f(x) = 0$ if $|x|<n$ and we prove that $f(y)=0$ for all $y$ with $|y|=n$. Fix $x$ with $|x|=n-1$ and denote $Q=S_x$. Since $f(x)=f(x^{(1)})=0$, we have
$$
\sum_{y\in\sons{x}} f(y)=0.
$$
But then, for all $\ell$
\begin{align*}
0=\langle \h_Q^\ell, f \rangle & = \sum_{y\in \sons{x}} \sum_{z\in\Sec_y} e_Q^\ell(y)f(z) \RH_n(z) \mu(z) \\ 
& = \sum_{y\in \sons{x}} e_Q^\ell(y)\sum_{z\in\Sec_y} f(z) \RH_n(z) \mu(z) \\ 
& = \sum_{y\in \sons{x}} e_Q^\ell(y)\sum_{j=1}^\infty \sum_{\substack{z\in\Sec_y\\|z|=|x|+j}} f(z) \RH_n(z) \mu(z) \\ 
& = \sum_{y\in \sons{x}} e_Q^\ell(y)\sum_{j=1}^\infty C_{j,n} f(y) \\
& = C_n\sum_{y\in \sons{x}} e_Q^\ell(y) f(y),
\end{align*}
using the radiality of $\mu$ and $\RH_n$. This implies that $f(y)=0$ for all $y\in\sons{x}$.
\end{proof}
Since pointwise evaluation is a bounded functional, $\Berg^2(\Tr)$ is a closed subspace of $L^2(\Tr)$, as in the continuous case. The Bergman projector $\Proj$ is the orthogonal projection $L^2(\Tr)\mapsto \Berg^2(\Tr)$. Because of Proposition \ref{prop:constructionofBasis}, $\Proj$ has the explicit expression
$$
\Proj f(x) =\avg{f}{\Tr}\one_\Tr(x) + \sum_{Q \in \D} \sum_{\ell=1}^{q(x_Q)-1}\langle f,h_Q^\ell\rangle h_Q^\ell(x).
$$
$\Proj$ is an integral operator with a kernel that can be written explicitly. But as we said in the Introduction, it is better to work with it adopting a different point of view. To that end, the following definition does not only ease our notation in the sequel, but it also encodes the right way of manipulating the pieces of the kernel of $\Proj$. For each $Q \in \D$ which is not a singleton, we set
$$
\Diff_Q f(x) := \sum_{\ell=1}^{\qf(x_Q)-1} \langle f, \h_Q^\ell\rangle \h_Q^\ell(x).
$$
When $Q$ is a singleton we set $\Diff_Q f = 0$. Clearly, for any $f$ we have $\supp(\Diff_Q f) \subseteq Q \setminus \{x_Q\}$ and $\Diff_Q f$ has vanishing mean. We will sometimes work with truncated versions of $\Proj$, which we denote
\[
\Proj^Rf:= \sum_{\substack{T\in\D\\T\subseteq R}}\Diff_Tf,
\]
in the case of the localized truncation inside $R\in\D$, and for a disjoint family $\mathcal{F}\subseteq \D$
\[
\Proj_{\mathcal{F}}f:=\avg{f}{\Tr}\one_\Tr + \sum_{\substack{T\in\D\\T\not\subseteq R \; \mathrm{ if }\; R\in\mathcal{F}}}\Diff_Tf.
\]
\begin{remark}\label{rem:remBergShifts}
Following the classical definition of Haar shifts, given a pair $(r,s) \in \N_+^2$ we define
$$
\Proj_\gamma^{r,s}f(x) = \sum_{Q \in \D} \sum_{R \in \D_r(Q)} \sum_{S \in \D_s(Q)} \sum_{k=1}^{\qf(v_R)-1} \sum_{\ell=1}^{\qf(v_S)-1} \gamma^Q_{R,S,k,\ell}\langle f, \h_R^k\rangle \h_S^\ell(x).
$$
with $\gamma \in \ell^\infty$. We call $\Proj_\gamma^{r,s}$ a Bergman shift of complexity $(r,s)$. The Bergman projector corresponds to the case $\Proj_{\gamma_0}^{0,0}$, where 
$$
\gamma_0 = \gamma^Q_{Q,Q,k,\ell}=\delta_{k,\ell}.
$$
It should be observed that, unless $r=s=0$, $\Proj_\gamma^{r,s}$ cannot be expressed as a sum involving only the operators $\Diff_Q$. This is one way to see that, when $\qf$ is unbounded, one should not expect to prove $L^p$-boundedness results for $\Proj_\gamma^{r,s}$ if $p\neq 2$. The heuristic reason comes from \cite{LSMP2012}, where the family of measures such that a Haar shift and its adjoint are $L^p$-bounded is characterized. The measure $\mu$ does not belong to that class, and in an analogous way one should expect $L^p$-boundedness to fail. It is possible to prove positive boundedness results for $\Proj_\gamma^{r,s}$ when $q$ is bounded.      
\end{remark}

\subsection{Kernel estimates for $\Proj$} We next collect two key estimates involving the operators $\Diff_R$ that will be useful in the following sections. By definition, $\Diff_R$ is an integral operator with kernel $\K_R$ given by
\[
\K_R(x,y)=\sum_{j=1}^{q(x_R)-1} h_R^j(x) h_R^j(y). 
\]
On the other hand, the space $W_R$ introduced above is a reproducing kernel Hilbert space. Denoting by $k_R(w,w')= \sum_{j=1}^{q(x_R)-1}e^j_R(w)e^j_R(w')$ its reproducing kernel, for $R\in\D_{k-1}$ we have
\begin{align}\label{eq: K&k}
\K_R(x,y)&=\sum_{j=1}^{q(x_R)-1} h_R^j(x)h_R^j(y)=\frac{q(x_R)\RH_k(x)\RH_k(y)}{\|\RH_k\one_{R}\|_2^2}\sum_{j=1}^{q(x_R)-1} e^j_R(x^{(|x|-|x_R|-1)})e_R^j(y^{(|y|-|x_R|-1)})\nonumber\\
&=\frac{q(x_R)\RH_k(x)\RH_k(y)}{\|\RH_k\one_{R}\|_2^2}k_R(x^{(|x|-|x_R|-1)},y^{(|y|-|x_R|-1)}).
\end{align}
Finally, a direct computation shows that $k_R$ is uniquely determined by 
\begin{equation*}
k_R(w,w')=\begin{dcases*}-\frac 1{q(x_R)},&\text{if }$w\neq w'$,\\
	1-\frac 1{q(x_R)},&\text{if }$w= w'$.
	\end{dcases*}\qquad (w,w')\in W_R \times W_R.
\end{equation*}

\begin{lemma}\label{lem: KrestF}
If $Q\in\D_{k}$ is not a single point, the following two estimates hold:
\begin{enumerate}
        \item For all $\ell\geq1$ and every $z\in Q^{(\ell)}$ we have 
        \begin{equation*}
            \sup_{x,y\in Q}|\K_{Q^{(\ell)}}(z,x)-\K_{Q^{(\ell)}}(z,y)|\lesssim\begin{dcases*}
            \frac{1}{\mu(Q^{(\ell-1)})}\nu_{k-\ell+1}^{k},&if $z\in Q^{(\ell-1)}$,\\
               \frac{1}{\mu(Q^{(\ell-1)})}\nu_{k-\ell}^{k},&otherwise.
            \end{dcases*}
        \end{equation*}
        \item For all $x,y \in Q^{(1)}\setminus \{x_Q^{(1)}\}$,
        \begin{equation*}
            |\K_{Q^{(1)}}(x,y)|\lesssim\begin{dcases*}
                \frac{1}{\mu(Q)},&if $x\wedge y\neq\Or Q^{(1)}$,\\
                \frac{1}{q(\Or Q^{(1)})\mu(Q)},&otherwise.
            \end{dcases*}
        \end{equation*}
    \end{enumerate}
\end{lemma}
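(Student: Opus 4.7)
Both estimates follow from the explicit formula \eqref{eq: K&k} once the size of the normalization $\|\RH_{k-\ell+1}\one_{Q^{(\ell)}}\|_2^2$ is precisely identified. Writing $k' = k-\ell+1$, so that $Q^{(\ell)} \in \D_{k'-1}$ and $x_{Q^{(\ell)}}$ sits at level $k'-1$, the function $\RH_{k'}$ vanishes at $x_{Q^{(\ell)}}$ and lies in $[1,2]$ on the rest of $Q^{(\ell)}$. Summing $\mu$ layer by layer over the descendants of $x_{Q^{(\ell)}}$ at level $\geq k'$ produces a convergent series whose first term dominates (using $\alpha > 1$ and $\qfint \geq 2$), yielding
\begin{equation*}
\|\RH_{k'}\one_{Q^{(\ell)}}\|_2^2 \sim \mu(x_{Q^{(\ell)}})\,\qf(x_{Q^{(\ell)}})^{1-\alpha}.
\end{equation*}
This is strictly smaller than $\mu(Q^{(\ell)}) \sim \mu(x_{Q^{(\ell)}})$ when $\qf$ is large, and the discrepancy is crucial. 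Inserting into \eqref{eq: K&k}, on the support of the kernel one obtains
\begin{equation*}
|\K_{Q^{(\ell)}}(z,x)| \sim \frac{\qf(x_{Q^{(\ell)}})^{\alpha}}{\mu(x_{Q^{(\ell)}})}\,|k_{Q^{(\ell)}}(w_z, w_x)|,
\end{equation*}
where $w_z, w_x$ denote the level-$k'$ ancestors of $z, x$ (in the notation of \eqref{eq: K&k}).

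For part (1), the key observation is that the inclusion $x, y \in Q \subseteq Q^{(\ell-1)}$ forces $w_x = w_y = x_{Q^{(\ell-1)}}$, so the reproducing-kernel factor $k_{Q^{(\ell)}}(w_z, \cdot)$ is common to both terms. Consequently $\K_{Q^{(\ell)}}(z,x) - \K_{Q^{(\ell)}}(z,y)$ is proportional to $\RH_{k'}(x) - \RH_{k'}(y)$, which by Lemma \ref{lem: RH} (and since $|x|,|y| \geq k$) is bounded by $\nu_{k'}^{k}$. Using the identity $\mu(x_{Q^{(\ell)}}) = \qf(x_{Q^{(\ell)}})^\alpha \mu(x_{Q^{(\ell-1)}}) \sim \qf(x_{Q^{(\ell)}})^\alpha \mu(Q^{(\ell-1)})$, the prefactor $\qf(x_{Q^{(\ell)}})^\alpha/\mu(x_{Q^{(\ell)}})$ collapses to $1/\mu(Q^{(\ell-1)})$, and it remains only to split on the location of $z$. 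If $z \in Q^{(\ell-1)}$ and $|z| \geq k'$, then $w_z = x_{Q^{(\ell-1)}} = w_x$, so $|k_{Q^{(\ell)}}| = 1 - 1/\qf(x_{Q^{(\ell)}}) \sim 1$ and the first bound follows. If $z \in Q^{(\ell)}\setminus Q^{(\ell-1)}$, then $w_z$ is a child of $x_{Q^{(\ell)}}$ different from $w_x$, so $|k_{Q^{(\ell)}}| = 1/\qf(x_{Q^{(\ell)}})$; combined with the multiplicative identity $\nu_{k-\ell}^{k} = \nu_{k-\ell+1}^{k}/\qf(x_{Q^{(\ell)}})$, this yields the second bound. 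The remaining case $|z| < k'$ is trivial since then $\RH_{k'}(z) = 0$.

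Part (2) is a direct application of the approximation above with $R = Q^{(1)}$ and $k' = k$. Here $\mu(x_{Q^{(1)}}) = \qf(x_{Q^{(1)}})^\alpha \mu(x_Q) \sim \qf(x_{Q^{(1)}})^\alpha \mu(Q)$, so the factor $\qf(x_{Q^{(1)}})^\alpha$ in the numerator of the prefactor cancels against the denominator, leaving $1/\mu(Q)$. The dichotomy on the confluent becomes $w_x = w_y$ (equivalent to $x\wedge y \neq x_{Q^{(1)}}$), in which case $|k_{Q^{(1)}}| \sim 1$, versus $w_x \neq w_y$, in which case $|k_{Q^{(1)}}| = 1/\qf(x_{Q^{(1)}})$; these recover the two cases stated. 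The principal bookkeeping hurdle throughout is to correctly pin down $\|\RH_{k'}\one_{Q^{(\ell)}}\|_2^2$, which differs from $\mu(Q^{(\ell)})$ by the factor $\qf(x_{Q^{(\ell)}})^{1-\alpha}$; with that in hand, the powers of $\qf(x_{Q^{(\ell)}})^{\pm\alpha}$ telescope to give exactly the claimed cancellations.
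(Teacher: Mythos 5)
Your proof is correct and takes essentially the same route as the paper's: both rely on the factorization \eqref{eq: K&k}, Lemma \ref{lem: RH} for the radial factor $\RH_{k-\ell+1}(x)-\RH_{k-\ell+1}(y)$, and the explicit values $1-1/\qf(x_{Q^{(\ell)}})$ versus $-1/\qf(x_{Q^{(\ell)}})$ of the reproducing kernel $k_{Q^{(\ell)}}$ to separate the two cases. Your explicit computation $\|\RH_{k-\ell+1}\one_{Q^{(\ell)}}\|_2^2 \sim \mu(x_{Q^{(\ell)}})\,\qf(x_{Q^{(\ell)}})^{1-\alpha}$, which makes the prefactor collapse to $1/\mu(Q^{(\ell-1)})$, is exactly the step the paper leaves implicit in its ``$\sim$''.
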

\begin{proof}
	\begin{enumerate}
		\item By the triangle inequality, it is enough to prove the assertion for $y=x_Q$. Since $1\leq\RH_{k-\ell+1}<2$, using \eqref{eq: K&k} and Lemma~\ref{lem: RH} yields
		\begin{align*}
			\K_{Q^{(\ell)}}(z,x)&-\K_{Q^{(\ell)}}(z,x_Q) \\
			& =\frac{q({x_Q^{(\ell)}})\RH_{k-\ell+1}(z)(\RH_{k-\ell+1}(x)-\RH_{k-\ell+1}(x_Q))}{\|\RH_{k-\ell+1}\one_{\anc Q{\ell}}\|_2^2}k_{Q^{(\ell)}}(z^{(|z|-k+\ell-1)},x_Q^{(\ell-1)})\\
			&\sim \nu_{k-\ell+1}^{k}\frac {1}{\mu(Q^{(\ell-1)})}k_{Q^{(\ell)}}(z^{(|z|-k+\ell-1)},x_Q^{(\ell-1)})\\
			&\sim \begin{dcases*}
			    \frac{1}{\mu(Q^{(\ell-1)})}\nu_{k-\ell+1}^{k},&if $z\in Q^{(\ell-1)}$,\\
            -\frac{1}{q(x_Q^{(\ell)})\mu(Q^{(\ell-1)})}\nu_{k-\ell+1}^{k}, &otherwise.
			\end{dcases*}
		\end{align*}
 \item Again by \eqref{eq: K&k},
            \begin{align*}
                \K_{Q^{(1)}}(x,y) & =\frac{q(\Or Q^{(1)})\RH_{k}(x)\RH_{k}(y)}{\|\RH_{k}\one_{Q^{(1)}}\|_2^2}k_{Q^{(1)}}(x^{(|x|-k)},y^{(|y|-k)})\\
                & \sim \begin{dcases*}
                \frac{1}{\mu(Q)},&if $x\wedge y\neq x_Q^{(1)}$,\\
                -\frac{1}{q(\Or Q^{(1)})\mu(Q)},&otherwise.
            \end{dcases*}            
            \end{align*}
\end{enumerate}
\end{proof}
We next prove an estimate for truncations of $\Proj$. Given a family $\mathcal{F}$ of disjoint sets in $\D$ contained in a sector $Q_0$, we denote the kernel of the truncation $\PrestF$ by $\KrestF$, which clearly can be written as
\begin{equation}\label{eq: KrestF}
    \KrestF= \sum_{\substack{Q\in\D,\, Q\subseteq Q_0 \\Q\not\subseteq R\in\mathcal F}}\K_Q.
\end{equation}
In the particular case when $Q_0=\Tr$, we slightly abuse notation and exclude $\K_{\Tr}$ from the sum |and will deal with it separately below|. 

\begin{prop}\label{prop:Krest}
    Let $Q\in\D_k$ be such that $Q\subseteq Q_0$ and $Q\not\subseteq R$ for any $R\in\mathcal{F}$. For every $z\in Q^{(\ell)}\setminus Q^{(\ell-1)}\subseteq Q_0$, we have 
    \begin{equation*}
        \sup_{x,y\in Q}|\KrestF(z,x)-\KrestF(z,y)|\lesssim\begin{dcases*}
            \frac{1}{\mu(Q^{(\ell)})}\nu_{k-\ell}^{k},&if $z= x_Q^{(\ell)}$,\\
            \frac{1}{\mu(Q^{(\ell-1)})}\nu_{k-\ell}^{k},&if $z\neq x_Q^{(\ell)}$.
        \end{dcases*}
    \end{equation*}
\end{prop}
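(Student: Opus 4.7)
The plan is to expand $\KrestF$ via \eqref{eq: KrestF} and identify precisely which $\K_{Q'}(z,\cdot)$ contribute to the difference $\KrestF(z,x)-\KrestF(z,y)$. Since every $h_{Q'}^j$ is supported in $Q'$, a term $\K_{Q'}(z,x)-\K_{Q'}(z,y)$ can only be nonzero when $z\in Q'$ and at least one of $x,y$ lies in $Q'$. Because $x,y\in Q$ and the dyadic sets are nested, $Q'$ must be comparable to $Q$. If $Q'\subsetneq Q$, then $Q'\subseteq Q^{(\ell-1)}$ while $z\notin Q^{(\ell-1)}$, so the term vanishes. Consequently only the ancestors $Q'=Q^{(j)}$ with $j\geq \ell$ (and satisfying $Q^{(j)}\subseteq Q_0$ and $Q^{(j)}\not\subseteq R$ for all $R\in\mathcal{F}$) contribute.

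Next I would invoke Lemma~\ref{lem: KrestF}(1) for each surviving $Q^{(j)}$ separately. For $j\geq \ell+1$ we have $z\in Q^{(\ell)}\subseteq Q^{(j-1)}$, so the lemma yields $|\K_{Q^{(j)}}(z,x)-\K_{Q^{(j)}}(z,y)|\lesssim \nu_{k-j+1}^{k}/\mu(Q^{(j-1)})$. The delicate term is $j=\ell$. Since $z\in Q^{(\ell)}\setminus Q^{(\ell-1)}$, two subcases arise. When $z=\Or{Q^{(\ell)}}$ is the vertex of $Q^{(\ell)}$, the definition of the radial harmonic extension forces $h_{Q^{(\ell)}}^j(\Or{Q^{(\ell)}})=0$ for every $j$, because $E_R(e)$ is supported in the sectors of the proper children of $x_R$. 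Hence $\K_{Q^{(\ell)}}(z,\cdot)\equiv 0$ and the $j=\ell$ term drops out entirely. When $z\neq \Or{Q^{(\ell)}}$, the ``otherwise'' case of the lemma provides the bound $\nu_{k-\ell}^{k}/\mu(Q^{(\ell-1)})$, matching the second alternative of the proposition.

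To close the estimate I would sum the ancestor contributions for $j\geq \ell+1$ as a geometric series. Using $\mu(Q^{(j)})/\mu(Q^{(j-1)})=q(k-j)^{\alpha}$ and $\nu_{k-j}^{k}/\nu_{k-j+1}^{k}=q(k-j)^{-1}$, the ratio between consecutive summands equals $q(k-j)^{-(\alpha+1)}\leq 2^{-(\alpha+1)}$; since $\alpha>1$ and $q\geq 2$, this ratio is bounded strictly below $1/2$, so the tail is controlled by its leading $(j=\ell+1)$ term $\nu_{k-\ell}^{k}/\mu(Q^{(\ell)})$. Combining: if $z=\Or{Q^{(\ell)}}$ only the tail survives and one obtains the bound $\nu_{k-\ell}^{k}/\mu(Q^{(\ell)})$; otherwise the $j=\ell$ term dominates the geometric tail and gives $\nu_{k-\ell}^{k}/\mu(Q^{(\ell-1)})$, which is exactly the announced estimate.

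The main obstacle I foresee is the extraction of the improvement in the first subcase: one must recognize the structural fact $h_{Q^{(\ell)}}^j(\Or{Q^{(\ell)}})=0$, which sits in the support of the radial harmonic extension rather than in a size estimate. Once this vanishing is noticed, the remaining work is a fairly routine nested-dyadic bookkeeping together with a geometric summation powered by $\alpha>1$ and the global lower bound $q\geq 2$.
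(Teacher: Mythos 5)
Your proposal is correct and follows essentially the same route as the paper's proof: reduce $\KrestF(z,\cdot)$ to the sum over the ancestors $Q^{(\ell)},Q^{(\ell+1)},\dots$, estimate each difference with Lemma~\ref{lem: KrestF}(1), observe that the $Q^{(\ell)}$ term vanishes when $z=x_Q^{(\ell)}$ (the paper states this directly; you justify it via the vanishing of the radial harmonic extension at the vertex), and control the resulting geometric sums by their first terms. The only cosmetic difference is that you spell out the support considerations and the ratio of consecutive summands, which the paper leaves implicit; note that for the geometric decay the crude bounds $\nu$-ratio $\le 1/2$ and $\mu(Q^{(j-1)})\le\mu(Q^{(j)})$ already suffice, so the appeal to $\alpha>1$ is not needed there.
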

\begin{proof}
We have 
    \[|\KrestF(z,x)-\KrestF(z,y)|\leq \sum_{j=0}^{k-\ell}|\K_{Q^{(\ell+j)}}(z,x)-\K_{Q^{(\ell+j)}}(z,y)|,\]
    where the first term of the sum vanishes whenever $z=x_Q^{(\ell)}$. By part $(1)$ of Lemma~\ref{lem: KrestF}, one has
    \begin{equation*}
        |\KrestF(z,x)-\KrestF(z,y)|\lesssim
        \begin{dcases*}
            \sum_{j=1}^{k-\ell}\frac{1}{\mu(Q^{(\ell+j-1)})}\nu_{k-\ell-j+1}^{k},&if $z= x_Q^{(\ell)}$\\
            \frac 1{\mu(Q^{(\ell-1)})} \nu_{k-\ell}^{k}+\sum_{j=1}^{k-\ell}\frac{1}{\mu(Q^{(\ell+j-1)})}\nu_{k-\ell-j+1}^{k},
            &if $z\neq x_Q^{(\ell)}$.
        \end{dcases*}
    \end{equation*}
The statement follows from the fact that both sums are geometric and hence controlled by their first term, and the one appearing in the term below is in turn controlled by the one on its left.
\end{proof}

The following notation will sometimes be used in the sequel: if $Q\in\D$, $\cl{Q}$ is the smallest sector that contains it |it is only different from $Q$ if the latter is a singleton|. Our last estimate is particularly useful when controlling local terms.

\begin{prop}\label{prop: estim D}
Let $b$ be a simple algebraic atom associated to $Q\in\D_k$. For every $1\leq \ell\leq|x_Q|$,
        \begin{subnumcases}{\Diff_{\anc {\cl Q}{\ell}}(b)(x)\lesssim
        }
            \frac{ \nu_{k-\ell+1}^{k} }{\mu(Q^{(\ell-1)})}\|b\|_1,
            &
            $\text{if }x\in\cl{Q}^{(\ell-1)}$,\label{eq: estim in}\\
            \frac{\nu_{k-\ell}^{k}}{\mu(Q^{(\ell-1)})}\|b\|_1,
            &$\text{if }x\in\cl{Q}^{(\ell)}\setminus\anc{\cl{Q}}{\ell-1}.$\label{eq: estim out}
        \end{subnumcases}
    Furthermore, if $Q$ is a single point, then $\Diff_{\cl Q}b\equiv 0$.
\end{prop}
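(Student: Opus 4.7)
The plan is to expand $\Diff_{\cl{Q}^{(\ell)}} b(x)$ using the explicit representation of $\K_{\cl{Q}^{(\ell)}}$ from \eqref{eq: K&k} together with the vanishing integral of $b$, and to carry out the estimate differently depending on whether $Q$ is a sector or a single point. I would dispose of the singleton case first. If $Q = \{x_Q\}$ then $a$ is a scalar multiple of $\one_{\{x_Q\}}$, so $b$ is a linear combination of $\one_{\{x_Q\}}$ and $\one_{\cl Q}$. Each $\h_{\cl Q}^\ell$ has the form $\RH_{|x_Q|+1}(\cdot)\, e_{\cl Q}^\ell(\pi(\cdot))$ up to normalization, with $\sum_{c \in \sons{x_Q}} e_{\cl Q}^\ell(c) = 0$; hence $\one_{\{x_Q\}}$ (supported where $\RH_{|x_Q|+1}$ vanishes) and $\one_{\cl Q}$ (radial) are orthogonal to every $\h_{\cl Q}^\ell$, giving $\Diff_{\cl Q} b \equiv 0$, which is the last claim. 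Moreover, since $b$ already has vanishing integral, it may itself be read as a simple algebraic atom associated to the sector $\cl Q \in \D_{k-1}$ by taking $a' = b$, so the estimates \eqref{eq: estim in}--\eqref{eq: estim out} for $\cl Q^{(\ell)}$ with $\ell \geq 1$ reduce to the sector case after a shift in the generation index.

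For the sector case, $\cl Q = Q \in \D_k$. Denote by $\pi(z)$ the unique child of $x_{Q^{(\ell)}}$ whose sector contains $z$. Formula \eqref{eq: K&k} allows one to write
\begin{align*}
\Diff_{Q^{(\ell)}} b(x) = \frac{\RH_{k-\ell+1}(x)}{\|\RH_{k-\ell+1}\one_{Q^{(\ell)}}\|_2^2} \sum_{c \in \sons{x_{Q^{(\ell)}}}} T_c \, k_{Q^{(\ell)}}(\pi(x), c),
\end{align*}
where $T_c := \int_{\Sec_c} b\, \RH_{k-\ell+1}\, d\mu$. For $\ell \geq 2$, $\supp(b) \subseteq Q^{(1)}$ sits in the single branch $\Sec_{y_\ast}$ with $y_\ast := x_Q^{(\ell-1)}$, so only $T_{y_\ast}$ is non-zero; for $\ell = 1$, the identity $\Diff_{Q^{(1)}}\one_{Q^{(1)}} = 0$ (each $\h_{Q^{(1)}}^\ell$ has zero mean) gives $\Diff_{Q^{(1)}} b = \Diff_{Q^{(1)}} a$ with $a$ supported in the branch $\Sec_{x_Q}$, and the display above applies verbatim with $y_\ast = x_Q$.

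The crux is then the estimate $|T_{y_\ast}| \lesssim \nu_{k-\ell+1}^k \|b\|_1$. Using $\int b\, d\mu = 0$, I would write
\[
T_{y_\ast} = \int_{Q^{(1)}} b(z) \big[\RH_{k-\ell+1}(z) - \RH_{k-\ell+1}(x_Q)\big] d\mu(z),
\]
and split the integration according to whether $z$ has $|z| \geq k$ (where Lemma~\ref{lem: RH} bounds the variation by $\nu_{k-\ell+1}^k$, producing a contribution of size $\nu_{k-\ell+1}^k \|b\|_1$) or $z = x_Q^{(1)}$ (at level $k-1$, where Lemma~\ref{lem: RH} naïvely only yields $\nu_{k-\ell+1}^{k-1}$). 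The main obstacle of the proof is precisely this isolated vertex $x_Q^{(1)}$: to recover the sharper factor $\nu_{k-\ell+1}^k$ one must use the specific structure $b = a - \avg{a}{Q^{(1)}}\one_{Q^{(1)}}$ together with the radial symmetry of $\mu$ and $\RH_{k-\ell+1}$, so that the contribution of $x_Q^{(1)}$ cancels against the radial part of $-\avg{a}{Q^{(1)}}\one_{Q^{(1)}}$ on the remaining branches of $Q^{(1)}$, all at level $\geq k$.

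With the bound on $T_{y_\ast}$ in hand, the final assembly uses the scale estimate $\|\RH_{k-\ell+1}\one_{Q^{(\ell)}}\|_2^2 \sim q(x_{Q^{(\ell)}})\, \mu(Q^{(\ell-1)})$ (which follows from $\RH_{k-\ell+1} \sim 1$ on the bulk of $Q^{(\ell)}$ together with the already exploited scaling $\mu(Q^{(\ell)}) \sim q(x_{Q^{(\ell)}})^\alpha \mu(Q^{(\ell-1)})$) and the two regimes of the reproducing kernel: $|k_{Q^{(\ell)}}(\pi(x), y_\ast)| \sim 1$ when $\pi(x) = y_\ast$ (equivalently $x \in \cl Q^{(\ell-1)}$) and $|k_{Q^{(\ell)}}(\pi(x), y_\ast)| \sim 1/q(x_{Q^{(\ell)}})$ otherwise. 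The first regime produces exactly \eqref{eq: estim in}, while in the second regime the additional factor $1/q(x_{Q^{(\ell)}})$ multiplies $\nu_{k-\ell+1}^k$ to form $\nu_{k-\ell}^k$, producing \eqref{eq: estim out}.
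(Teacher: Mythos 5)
Your overall architecture is the same as the paper's: expand $\Diff_{\anc{\cl Q}{\ell}}b$ through \eqref{eq: K&k}, note that for $\ell\geq 2$ only the branch of $x_Q^{(\ell-1)}$ carries the support of $b$, distinguish the two regimes of $k_{Q^{(\ell)}}$ (value $\sim 1$ when $\pi(x)=x_Q^{(\ell-1)}$, value $1/q(x_{Q^{(\ell)}})$ otherwise), use $\|\RH_{k-\ell+1}\one_{Q^{(\ell)}}\|_2^2\sim q(x_{Q^{(\ell)}})\mu(Q^{(\ell-1)})$, treat $\ell=1$ separately via $\Diff_{Q^{(1)}}\one_{Q^{(1)}}=0$, and kill the singleton case by orthogonality of $\one_{\{x_Q\}}$ and $\one_{\cl Q}$ to every $\h_{\cl Q}^j$ — the paper does exactly this, phrased as the bound $\sup_{y\in Q^{(1)}}|\K_{Q^{(\ell)}}(x,y)-\K_{Q^{(\ell)}}(x,x_{Q^{(1)}})|\,\|b\|_1$ via Lemma \ref{lem: KrestF}, plus \eqref{eq: Diff vanish k=1}. (Two bookkeeping points: your kernel display drops the factor $q(x_{Q^{(\ell)}})$ appearing in \eqref{eq: K&k}, which you need for the final assembly to come out as claimed; and the reduction of the singleton case to "a simple algebraic atom associated to $\cl Q\in\D_{k-1}$" changes all indices by one, so the resulting exponents must be rechecked against the statement rather than invoked "verbatim".)

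The genuine gap is the crux estimate $|T_{y_*}|\lesssim \nu_{k-\ell+1}^{k}\|b\|_1$, which you do not prove but attribute to a cancellation of the contribution of the vertex $x_{Q^{(1)}}$ against the radial part of $-\avg{a}{Q^{(1)}}\one_{Q^{(1)}}$. No such cancellation exists. Take $a=\one_Q$, so $b=\one_Q-\tfrac{\mu(Q)}{\mu(Q^{(1)})}\one_{Q^{(1)}}$ and $\|b\|_1\sim\mu(Q)$, and set $j=k-\ell+1$ with $\ell\geq 2$. Radiality of $\mu$ and $\RH_j$ gives $\int_{Q^{(1)}}\RH_j\,d\mu=\RH_j(x_{Q^{(1)}})\mu(x_{Q^{(1)}})+q(x_{Q^{(1)}})\int_{Q}\RH_j\,d\mu$, hence
\begin{equation*}
T_{y_*}=\int_{Q^{(1)}}b\,\RH_j\,d\mu=\frac{\mu(x_{Q^{(1)}})\,\mu(Q)}{\mu(Q^{(1)})}\Bigl(\avg{\RH_j}{Q}-\RH_j(x_{Q^{(1)}})\Bigr)\sim \mu(Q)\,\nu_{j}^{k-1},
\end{equation*}
since $\avg{\RH_j}{Q}-\RH_j(x_{Q^{(1)}})\in[\nu_j^{k-1},2\nu_j^{k-1}]$ and $\mu(x_{Q^{(1)}})\sim\mu(Q^{(1)})$. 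Thus $|T_{y_*}|\sim \nu_{k-\ell+1}^{k-1}\|b\|_1=q(x_Q)\,\nu_{k-\ell+1}^{k}\|b\|_1$, and the sharper factor you are after is simply not there; the same happens at $\ell=1$, where $T_{x_Q}=\int_Q a\,\RH_k\,d\mu\sim\|b\|_1$ for $a=\one_Q$, while your target would force $\lesssim\|b\|_1/q(x_Q)$. What your scheme honestly yields — and what the paper's own computation yields, since its reference point $x_{Q^{(1)}}$ sits at level $k-1$ and Lemma \ref{lem: KrestF} is applied to $Q^{(1)}\in\D_{k-1}$ — is the display with $\nu_{k-\ell+1}^{k-1}$ and $\nu_{k-\ell}^{k-1}$ in place of $\nu_{k-\ell+1}^{k}$ and $\nu_{k-\ell}^{k}$, i.e. an extra factor $q(x_Q)$ relative to the literal exponents (and for $\ell=1$ the bounds $\|b\|_1/\mu(Q)$ inside $Q$ and $\|b\|_1/(q(x_{Q^{(1)}})\mu(Q))$ on the sibling branches). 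That weaker version is all that is used later in the paper, since only the geometric decay in $\ell$ and the relative $1/q(x_Q^{(\ell)})$ gain in the outside regime matter; the correct fix is therefore to prove and use that version, not to chase the cancellation at $x_{Q^{(1)}}$ — as written, that step of your argument is false and the proof does not close.
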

\begin{proof}
The case $\ell> 1$ follows from the fact that $b$ has null mean and Lemma~\ref{lem: KrestF} applied to $\anc{\cl Q}1$. Indeed, 
\begin{align*}
    |\Diff_{\anc{\cl{Q}}{\ell}}b(x)|&=\left|\sum_{y\in\anc{Q}{1}}\Bigl(\K_{\anc{\cl Q}\ell}\bigl(x,y\bigr)-\K_{\anc{\cl Q}\ell}\bigl(x,\Or{\anc{\cl Q}{1}}\bigr)\Bigr)b(y)\mu(y)\right|\\
    & \leq \sup_{y\in Q^{(1)}} \left|\K_{\anc{\cl Q}\ell}\bigl(x,y\bigr)-\K_{\anc{\cl Q}\ell}\bigl(x,\Or{\anc{\cl Q}{1}}\bigr)\right| \|b\|_1 \\
    &
    \leq\begin{dcases*}
        \frac{\nu_{k-\ell+1}^{k}}{\mu(Q^{(\ell-1)})}\|b\|_1,
            &
            $\text{if }x\in\cl{Q}^{(\ell-1)}$,\label{eq: estim in}\\
            \frac{\nu_{k-\ell}^{k}}{\mu(Q^{(\ell-1)})}\|b\|_1,
            & $\text{if }x\in\cl{Q}^{(\ell)}\setminus\anc{\cl{Q}}{\ell-1},$
    \end{dcases*}
\end{align*}
regardless of whether $Q=\cl{Q}$ or not. For $\ell=1$, if $Q$ is a single point one can argue similarly as above. Otherwise, denote by $R$ the element so that $x\in R\in \D_1(Q^{(1)})$. By $(2)$ of Lemma~\ref{lem: KrestF}, one has
\begin{align*}
|\Diff_{\anc Q1}b(x)|& \leq \sum_{y\in\anc Q1\setminus\{x_Q^{(1)}\}}|\K_{\anc Q1}(x,y)||b(y)|\mu(y) \\ 
& \lesssim \frac 1{\mu(Q)}\left[\|b\one_R\|_1+\frac{1}{q(x_Q^{(1)})}\|b\one_{\anc Q1\setminus(R\cup\{x_Q^{(1)}\})}\|_1\right].
\end{align*}
If $R=Q$, then clearly $|\Diff_{\anc Q1}b(x)| \lesssim \|b\|_1\mu(Q)^{-1}$. Otherwise, since $b = a - \avg{a}{Q^{(1)}}\one_{Q^{(1)}}$ we have that
    \[|\Diff_{\anc Q1}b(x)|\lesssim \frac 1{\mu(Q)}\left[\mu(R)|\avg{a}{\anc Q1}|+\frac{1}{q(x_Q^{(1)})}\|b\|_1\right]\lesssim \frac{\|b\|_1}{q(x_Q^{(1)})\mu(Q)}.\]
Finally, the fact that $\Diff_{\cl Q}b\equiv 0$ when $Q=\{v\}$ follows from
    \begin{align}\label{eq: Diff vanish k=1}
        \Diff_{\cl Q}b(x)&=-\frac{a(v)}{\mu(\cl Q)}\sum_{y\in\cl Q\setminus\{v\}}\K_{\cl Q}(x,y)\mu(y)\nonumber\\
        &=-\frac{a(v)}{\mu(\cl Q)}\sum_{z\in\sons{v}}k_{\cl{Q}}(x^{|x|-|v|-1},z)\sum_{y\in\Sec_z}\frac{\RH_{|v|+1}(x)\RH_{|v|+1}(y)}{\|\RH_{|v|+1}\one_{\cl Q}\|_2^2}\mu(y)=0,
    \end{align} 
    by the definition of $k_{\cl{Q}}$ and the radiality of $\RH_{|v|+1}$.
\end{proof}

\begin{remark}
    The key to obtaining the estimates above is to exploit the joint cancellation enjoyed by the family $\{\h_Q^\ell\}_\ell$ for each fixed $Q$. We do that by studying the operators $\Diff_Q$ as a whole instead of considering the individual products $\langle f, \h_Q^\ell \rangle \h_Q^\ell$, of which there can be arbitrarily many within each cube $Q$, and for which size estimates are not precise enough. This is a key difference with respect to the usual point of view that one adopts in dyadic harmonic analysis, in which the action of each function in the Haar basis is dealt with separately. In a sense, one can interpret our point of view as a way to dealing with the fact that $\Tr$ may be infinite-dimensional.
\end{remark}

\section{Sparse forms associated with $\Proj$} \label{sec2}

This section is devoted to the proof of Theorems \ref{th:theoremA} and \ref{th:theoremB}. We start defining the form $\Extras$ that appears in the statement of Theorem \ref{th:theoremA}. If $\Ss \subseteq \D$ is a sparse family we put
\begin{equation}\label{eq:sparsenew}
\Extras(f_1,f_2):=\sum_{Q=\cl{Q}\in \Ss} \avg{f_1}{Q} \Bigg(\frac{1}{q(x_Q^{(1)})}\sum_{\substack{R=\cl{R}\in\Ss \\ R^{(1)}=Q^{(1)}}}\avg{f_2}{R}\Bigg) \mu(Q).
\end{equation}

We need to use the following variant of the Calder\'on-Zygmund decomposition for general measures, both in the next Subsection and in Section \ref{sec4}.

\begin{lemma} \label{lem:CZnonhomogeneous}
Let $R\in\D$, $0\leq f\in L^1(R)$ and $\lambda> \mu(R)^{-1}\|f\|_{L^1(R)}$. Let $\{Q_j\}_j \subseteq \D$ be a disjoint family such that 
$$
\left\{\M_\D f > \lambda\right\} \subseteq \bigcup_j Q_j,
$$ 
and $Q_j^{(1)}\not\subseteq \bigcup_{\ell}Q_{\ell}$ for any $j$. Then, if we define $b=\sum_j b_{Q_j}$, where
$$
b_{Q_j} := f \one_{Q_j} - \langle f \one_{Q_j}\rangle_{Q^{(1)}_j} \one_{Q^{(1)}_j},
$$
and $f=g+b$, the following properties are satisfied:
\begin{enumerate}
    \item Localization:
    \[
    \supp(b_{Q_j})\subseteq Q^{(1)}_j,\qquad \sum_{x\in Q^{(1)}_j}b_{Q_j}(x)\mu(x)=0.
    \]
    \item $L^1$-bounds:
    $$
    \|g\|_1 \lesssim \|f\|_1, \quad \sum_j \|b_{Q_j}\|_1 \lesssim \|f\|_1.
    $$
    \item Higher integrability for $g$:
    $$
    \|g\|_2^2 \lesssim \lambda \|f\|_1, \quad \|g\|_{\BMO} \lesssim \lambda.
    $$
\end{enumerate}
\end{lemma}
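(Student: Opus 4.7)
Parts (1) and (2) are immediate: the support and mean-zero properties of $b_{Q_j}$ follow by construction, while $\|b_{Q_j}\|_1 \leq 2\|f\one_{Q_j}\|_1$ combined with the disjointness of the $Q_j$ yields $\sum_j\|b_{Q_j}\|_1 \leq 2\|f\|_1$ and hence $\|g\|_1 \leq 3\|f\|_1$.

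The heart of the proof is (3). The plan is to represent $g = g_1 + g_2$, where $g_1 = f\one_{R\setminus\bigcup_\ell Q_\ell}$ and $g_2 = \sum_{P}d_P\one_P$ ranges over the distinct parents $P=Q_j^{(1)}$ with $d_P = \sum_{j:Q_j^{(1)}=P}\langle f\one_{Q_j}\rangle_P$. The hypothesis $Q_j^{(1)}\not\subseteq\bigcup_\ell Q_\ell$ produces a point of $Q_j^{(1)}$ outside $\{\M_\D f>\lambda\}$, forcing $\langle f\rangle_P \leq \lambda$ and hence $d_P \leq \lambda$ for every such parent $P$; likewise $f \leq \lambda$ pointwise on $R\setminus\bigcup_\ell Q_\ell$.

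For the $L^2$ estimate, $\|g_1\|_2^2 \leq \lambda\|g_1\|_1\leq\lambda\|f\|_1$ is trivial. Since any two distinct parents are pairwise nested or disjoint,
\begin{equation*}
\|g_2\|_2^2 \;=\; \sum_P d_P^2\mu(P) \,+\, 2\sum_{P\subsetneq P'}d_Pd_{P'}\mu(P).
\end{equation*}
The crucial input is the telescoping inequality
\begin{equation*}
\sum_{P\subsetneq P'}d_P\mu(P) \;=\; \sum_{P\subsetneq P'}\int_{A_P}f\,d\mu \;\leq\; \int_{P'}f\,d\mu \;\leq\; \lambda\mu(P'),
\end{equation*}
valid because the sets $A_P = \bigcup_{j:Q_j^{(1)}=P}Q_j$ are pairwise disjoint subsets of $P'$ and $P'$ is itself a parent (so $\langle f\rangle_{P'}\leq\lambda$). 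Combined with $d_P\leq\lambda$, this bounds both terms in the expansion of $\|g_2\|_2^2$ by $\lambda\|f\|_1$.

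For the BMO bound I would use the equivalent norm \eqref{eq:equivnorm}. The global mean term is controlled by $|\int g\,d\mu| = |\int f\,d\mu| \leq \lambda\mu(R)\leq\lambda$. For a generic $Q\in\D$, parents containing $Q$ are constant on $Q$ and parents disjoint from $Q$ contribute nothing, so only parents strictly contained in $Q$ enter the oscillation of $g$ on $Q$; localizing the telescoping inequality to $Q$ then gives $\frac{1}{\mu(Q)}\int_Q(g-\langle g\rangle_Q)^2\lesssim \lambda^2$, and Cauchy-Schwarz yields the required $L^1$-oscillation bound. The dyadic-difference term $|\langle g\rangle_Q - \langle g\rangle_{Q^{(1)}}|$ is handled analogously, splitting by whether parents $P$ lie above $Q^{(1)}$, coincide with $Q$ (the only possibility strictly between $Q$ and $Q^{(1)}$, since the filtration refines one generation at a time), sit strictly inside $Q$, or sit strictly inside $Q^{(1)}$ but disjoint from $Q$. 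The delicate point in both computations is the verification of $\sum_{P\subsetneq Q}d_P\mu(P)\lesssim\lambda\mu(Q)$: this follows from $\langle f\rangle_Q\leq\lambda$ when $Q$ contains a parent, while the sum vanishes either when $Q$ is disjoint from every $Q_\ell$ or when $Q\subsetneq Q_{j^*}$ for some $j^*$ (since disjointness of the $Q_\ell$ then prevents any parent from fitting strictly inside $Q$). The main obstacle is that the naive bound $\|g\|_2^2\leq\|g\|_\infty\|g\|_1$ is unavailable, because $g$ may fail to be pointwise $\lesssim\lambda$ when the parents form long nested chains; the telescoping identity is what replaces the missing pointwise control with the needed integral control.
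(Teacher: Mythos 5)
Your proof is correct; note that the paper does not prove this lemma itself but defers it to the literature (the adaptation to \cite{CoPa2019,CCP2022} and the key $\BMO$ bound to \cite{CPW2023}), so you have in effect supplied the omitted argument. Your mechanism — writing $g=f\one_{\Tr\setminus\bigcup_\ell Q_\ell}+\sum_P d_P\one_P$ over distinct parents, extracting $\langle f\rangle_P\le\lambda$ (and, more generally, $\langle f\rangle_Q\le\lambda$ for any $Q\in\D$ strictly containing a parent) from the hypothesis $Q_j^{(1)}\not\subseteq\bigcup_\ell Q_\ell$, and compensating for the failure of the pointwise bound $g\lesssim\lambda$ along nested chains of parents via the telescoping/packing estimate $\sum_{P\subsetneq P'}d_P\mu(P)\le\int_{P'}f\,d\mu\le\lambda\mu(P')$, applied both globally for the $L^2$ bound and localized to a generic $Q$ for the two terms of the equivalent $\BMO$ norm \eqref{eq:equivnorm} — is precisely the argument of the cited sources, and the case analysis you sketch for the dyadic-difference term does close (the only element of $\D$ strictly between $Q$ and $Q^{(1)}$ is $Q$ itself, contributing at most $d_Q\le\lambda$).
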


The statement above is different from the usual one in that the \CZ cubes can be taken to be as large as desired, so long as they are still disjoint (and so smallness of the measure of the exceptional set is not claimed). We take advantage of this additional feature below in the proof of Theorem \ref{th:theoremA}, as is usually done in sparse domination proofs. The proof of Lemma \ref{lem:CZnonhomogeneous} is a straightforward adaptation from \cite{CoPa2019,CCP2022}, and is hence omitted. The key $\BMO$ estimate is proven in \cite{CPW2023}.

\subsection{Sparse domination} \label{sec21} In the proof of Theorem \ref{th:theoremA} we will need to use the uniform $L^2$-boundedness of truncations of $\Proj$, that we record below:
\begin{equation}\label{eq:L2unifBound}
    \sup_{Q_0\in\D,\;\mathcal{F}\subseteq \D} \left\|\Proj_{\mathcal{F}}^{Q_0} \right\|_{L^2(\Tr)\to L^2(\Tr)} < \infty.
\end{equation}
The proof of \eqref{eq:L2unifBound} follows immediately from the orthogonality and normalization of the operators $\Diff_Q$ and we omit it. 

\begin{proof}[Proof of Theorem \ref{th:theoremA}]
Since we will have $\Tr \in \Ss$ for all pairs $(f_1,f_2)$, it is enough to prove sparse domination for the operator $\Proj-\avg{f}{\Tr}\one_\Tr$. Let $f_1,f_2\in L^1(\Tr)$ be nonnegative, and set $Q_0=\Tr$. We have
$$
\langle \Proj f_1, f_2\rangle = \avg{f_1}{Q_0}\|f_2\|_1 + \left\langle \sum_{Q\in\D}\Diff_Q f_1, f_2\right\rangle = \avg{f_1}{Q_0}\avg{f_2}{Q_0}\mu(Q_0) + \langle \Proj^{Q_0}f_1,f_2 \rangle. 
$$
Since $Q_0 \in \Ss$, the first term in the right hand side above is acceptable, and we focus on estimating the second for the remainder of the proof. We denote
\[
\Omega^0_i=\{\M_\D f_i>4\|\M_\D\|_{L^1(\Tr) \to L^{1,\infty}(\Tr)}\langle f_i\rangle_{Q_0} \},\qquad i\in\{1,2\},
\]
and $\Omega^0=\Omega^0_1\cup\Omega^0_2$. We cover $\Omega^0$ with a disjoint family $\mathcal F^1\subseteq \D$, maximal with respect to inclusion. For $i=1,2$, we apply the \CZ decomposition from Lemma~\ref{lem:CZnonhomogeneous} to $f_i$ at height  $\lambda_i=4\|\M_\D\|_{L^1(\Tr) \to L^{1,\infty}(\Tr)}\avg{f_i}{Q_0}$.
We next estimate the non localizing part of $\Proj$, namely the truncation
$$
\Prest f=\Proj^{Q_0} f-\sum_{Q\in\mathcal F^1}\Proj^Qf.
$$
We claim
\begin{align}
|\langle \Prest f_1,f_2\rangle|\lesssim & \; 
        \avg{f_1}{Q_0} \avg{f_2}{Q_0} \mu(Q_0) +  \sum_{Q\in\mathcal{F}^1} \avg{f_1}{Q} \avg{f_2}{Q} \mu(Q) \notag\\
        & + \sum_{Q=\cl{Q}\in\mathcal{F}^1} \avg{f_1}{Q}  \bigg(\frac{1}{q(\Or Q^{(1)})} \sum_{\substack{R=\cl{R}\in\mathcal{F}^1 \\ R^{(1)}=Q^{(1)}}} \avg{f_2}{R}\bigg)\mu(Q). \label{eq: claim first step}
\end{align}
We split according to the \CZ pieces:
\begin{align*}
|\langle \Prest f_1,f_2\rangle|&\leq|\langle \Prest g_1,g_2\rangle|+|\langle \Prest b_1,g_2\rangle|+|\langle \Prest g_1,b_2\rangle|+|\langle \Prest b_1,b_2\rangle|\\
&=:\text{I}+\text{II}+\text{III}+\text{IV},
\end{align*}
and we analyze separately the four cases. For term I, Cauchy-Schwarz's inequality and \eqref{eq:L2unifBound} yield
\begin{align*}
\text{I}\leq \|\Prest g_1\|_2\|g_2\|_2\lesssim\|g_1\|_2\|g_2\|_2&\lesssim (\langle f_1\rangle_{Q_0}\|f_1\|_1\,\langle f_2\rangle_{Q_0}\|f_2\|_1)^{1/2}=\avg{f_1}{Q_0}\avg{f_2}{Q_0}\mu(Q_0).
\end{align*}
To analyze the terms involving the bad parts in the \CZ decomposition, we write
\begin{align}\label{eq: Prestbad}
        \Prest(b_{1,Q})&=\sum_{\substack{D=\cl{D}\in\D\\ D\cap\anc{Q}1\neq\emptyset}}\Diff_D b_{1,Q}-\sum_{\substack{R=\cl{R}\in\mathcal{F}^1\\ R\cap\anc{Q}1\neq\emptyset}}\sum_{\substack{T=\cl{T}\in\D\\ T\subseteq R\\ T\cap\anc{Q}1\neq\emptyset}}\Diff_T b_{1,Q} \nonumber\\
        &=\sum_{k=1}^{|x_Q|} \Diff_{\anc{Q}k}b_{1,Q}+\sum_{\substack{T=\cl{T}\in\D\\ T\subseteq \anc{Q}1\setminus\Omega^0}}\Diff_T b_{1,Q} .
\end{align}
For every $T\subseteq \anc{Q}1\setminus\Omega^0$, we have $b_{1,Q}|_T\equiv \avg{f\one_Q}{Q^{(1)}}$, and so the last sum in~\eqref{eq: Prestbad} vanishes. 
Using that $\Diff_{\cl Q}b_{1,Q}\equiv 0$ when $Q$ is a singleton, the above formula reduces to
    \begin{equation}\label{eq: final Prest}
        \Prest(b_{1,Q})=\sum_{k=1}^{|x_Q|}\Diff_{\anc{\cl Q}{k}}b_{1,Q}.
    \end{equation}
Since $\Prest$ is self-adjoint, it is enough to study II and the estimate for III will follow. For every $1\leq k\leq|x_Q|$,
we put $g_2^{(k)}=g_2-\avg{g_2}{\anc{\cl Q}{k}}$. By~\eqref{eq:equivnorm}, we have $\avg{g_2^{(k)}}{R}\lesssim\|g_2\|_{\BMO}$ for every $R\in\D^1(\anc{\cl{Q}}{k})$. Then, by the vanishing mean of $\Diff_{\anc{\cl Q}{k}}(b_{1,Q})$ and Proposition~\ref{prop: estim D} we have
    \begin{align*}
    \langle \Diff_{\anc{\cl Q}{k}}&(b_{1,Q}),g_2\rangle
    =\langle \Diff_{\anc{\cl Q}{k}}(b_{1,Q}),g_2-\avg{g_2}{\anc{\cl Q}{k}}\rangle\\
    &=\|\Diff_{\anc{\cl Q}{k}}(b_{1,Q})\one_{\anc{\cl Q}{k-1}}\|_\infty\|g_2^{(k)}\one_{\anc{\cl Q}{k-1}}\|_1+\sum_{\substack{z\in\sons{x_Q^{(k)}}\\z\neq x_Q^{(k-1)}}}\|\Diff_{\anc{\cl Q}{k}}(b_{1,Q})\one_{\Sec_z}\|_\infty\|g_2^{(k)}\one_{\Sec_z}\|_1\\
    &\lesssim \nu_{|x_Q|-k+1}^{|x_Q|}\|b_{1,Q}\|_1\|g_2\|_{\BMO}\Bigl(1+\frac 1{q(x_Q^{(k)})}\sum_{\substack{z\in\sons{x_Q^{(k)}}\\z\neq x_Q^{(k-1)}}}1\Bigr)\\
    &\lesssim \nu_{|x_Q|-k+1}^{|x_Q|}\|b_{1,Q}\|_1\|g_2\|_{\BMO}.
    \end{align*}
Summing over $k$ yields 
    \[\sum_{Q\in\mathcal{F}^1}\langle\Prest(b_{1,Q}) ,g_2\rangle\lesssim \sum_{Q\in\mathcal{F}^1}\|b_{1,Q}\|_1\|g_2\|_{\BMO}\sum_{k=1}^{|x_Q|}\nu_{|x_Q|-k+1}^{|x_Q|}\lesssim  \|f_1\|_{1,Q_0}\|f_2\|_{1,Q_0}\mu(Q_0).\]
We are left with term IV, that we need to further split as follows:
\begin{equation*}
    \langle \Prest b_{1}, b_{2}\rangle
    =\sum_{Q,R\in\mathcal{F}^1}\langle \Prest b_{1,Q}, b_{2,R}\rangle
    \le 2\sum_{Q\in\mathcal{F}^1}\sum_{\substack{R\in\mathcal{F}^1 \\ \mu(R)\le \mu(Q)}}\langle \Prest b_{1,Q}, b_{2,R}\rangle
    = 2\sum_{Q\in\mathcal{F}^1} \text{IV}_Q,
\end{equation*}
where
\begin{align*}
    \text{IV}_Q=&\sum_{x\in\Tr} \Prest(b_{1,Q})(x) \sum_{\substack{R\in\mathcal{F}^1 \\ \mu(R)\le \mu(Q)}} b_{2,R}(x)\mu(x)\\
    =&\sum_{x\in Q^{(1)}} \Prest(b_{1,Q})(x) \sum_{\substack{R\in\mathcal{F}^1 \\ \mu(R)\le \mu(Q)}} b_{2,R}(x)\mu(x)\\
    &+\sum_{k=2}^{|x_Q|}\sum_{x\in Q^{(k)}\setminus Q^{(k-1)}} \Prest(b_{1,Q})(x) \sum_{\substack{R\in\mathcal{F}^1 \\ \mu(R)\le \mu(Q)}} b_{2,R}(x)\mu(x)\\
    =&:\text{IV}_{Q,1}+\sum_{k=2}^{|x_Q|}\text{IV}_{Q,k}.
\end{align*}
For $k\ge2$ we have $b_{2,R}(x_Q^{(k)}) = 0$ because $\mu(R) \leq \mu(Q)$. Therefore, 
\begin{align*}
    |&\text{IV}_{Q,k}|=\Big|\sum_{x\in Q^{(k)}\setminus Q^{(k-1)}} \Prest(b_{1,Q})(x) \sum_{\substack{R\in\mathcal{F}^1 \\ \mu(R)\le \mu(Q)}} b_{2,R}(x)\mu(x)\Big|\\
    & =\Big|\sum_{x\in Q^{(k)}\setminus (Q^{(k-1)}\cup\{x_Q^{(k)}\})} \sum_{y\in Q^{(1)}}(\Krest(x,y)-\Krest(x,\Or Q)) b_{1,Q}(y)\mu(y) \sum_{\substack{R\in\mathcal{F}^1 \\ \mu(R)\le \mu(Q)}} b_{2,R}(x)\mu(x)\Big|\\
    & \le \|b_{1,Q}\|_1 \sup_{y\in Q^{(1)}} \sum_{x\in Q^{(k)}\setminus (Q^{(k-1)}\cup\{x_Q^{(k)}\})} |\Krest(x,y)-\Krest(x,\Or Q)| \sum_{\substack{R\in\mathcal{F}^1 \\ \mu(R)\le \mu(Q)}} |b_{2,R}(x)|\mu(x)\\
    & \leq \|b_{1,Q}\|_1 \Bigg( \sum_{\substack{Q^{(k-1)}\ne S=\cl{S} \in \D \\ S^{(1)}=Q^{(k)}}} \sum_{x\in S}\sup_{y\in Q^{(1)}} |\Krest(x,y)-\Krest(x,\Or Q)| \sum_{\substack{R\in\mathcal{F}^1\!, \;R\subsetneq S \\ \mu(R)\le \mu(Q)}} |b_{2,R}(x)|\mu(x)\Bigg)\\
    & \lesssim\|b_{1,Q}\|_1 \Bigg( \sum_{\substack{Q^{(k-1)}\ne S=\cl{S} \in \D \\ S^{(1)}=Q^{(k)}}} 
    \frac{1}{\mu(Q^{(k-1)})}\nu_{|x_Q|-k}^{|x_Q|}
    \sum_{x\in S}  \sum_{\substack{R\in\mathcal{F}^1\!,\; R\subsetneq S \\ \mu(R)\le \mu(Q)}} |b_{2,R}(x)|\mu(x)\Bigg)\\
    &\le \|b_{1,Q}\|_1 \nu_{|x_Q|-k}^{|x_Q|} \Bigg(\sum_{\substack{Q^{(k-1)}\ne S=\cl{S} \in \D \\ S^{(1)}=Q^{(k)}}}   \avg{f_2}{S} \Bigg) \lesssim \|b_{1,Q}\|_1 \nu_{|x_Q|-k+1}^{|x_Q|}\avg{f_2}{Q_0},
\end{align*}
by Proposition \ref{prop:Krest} and the maximality of the \CZ cubes $Q$. For $\text{IV}_{Q,1}$ we use \eqref{eq: final Prest} to get
\begin{align*}
    \text{IV}_{Q,1}& = \; \sum_{x\in Q^{(1)}} \Prest(b_{1,Q})(x) \sum_{\substack{R\in\mathcal{F}^1 \\ \mu(R)\le \mu(Q)}} b_{2,R}(x)\mu(x)\\
     =& \; \sum_{x\in Q^{(1)}} \sum_{k=1}^{|x_Q|}\Diff_{\anc{\cl Q}{k}}b_{1,Q}(x) \sum_{\substack{R\in\mathcal{F}^1 \\ \mu(R)\le \mu(Q)}} b_{2,R}(x)\mu(x)\\
    =& \;\sum_{x\in Q^{(1)}} \Diff_{\anc{\cl Q}{1}}b_{1,Q}(x) \sum_{\substack{R\in\mathcal{F}^1 \\ \mu(R)\le \mu(Q)}} b_{2,R}(x)\mu(x)\\
     & +\sum_{x\in Q^{(1)}} \sum_{k=2}^{|x_Q|}\Diff_{\anc{\cl Q}{k}}b_{1,Q}(x) \sum_{\substack{R\in\mathcal{F}^1 \\ \mu(R)\le \mu(Q)}} b_{2,R}(x)\mu(x)\\
    =: &\; \text{IV}_{Q,1}^a+\text{IV}_{Q,1}^b.
\end{align*}
We next use Proposition \ref{prop: estim D} to get
\begin{align*}
    |\text{IV}_{Q,1}^b|\le& \|b_{1,Q}\|_1 \sum_{k=2}^{|x_Q|} \frac{\nu_{|x_Q|-k+1}^{|x_Q|}}{\mu(Q^{(k-1)})} \sum_{x\in Q^{(1)}} \sum_{\substack{R\in\mathcal{F}^1 \\ \mu(R)\le \mu(Q)}} b_{2,R}(x)\mu(x)\\
    \le& \|b_{1,Q}\|_1 \sum_{k=2}^{|x_Q|} \nu_{|x_Q|-k+1}^{|x_Q|}\avg{f_2}{Q^{(1)}}\\
    \lesssim& \|b_{1,Q}\|_1 \avg{f_2}{Q_0}\sum_{k=2}^{|x_Q|} \nu_{|x_Q|-k+1}^{|x_Q|}\\
    \lesssim&\|b_{1,Q}\|_1 \avg{f_2}{Q_0}.
\end{align*}
To estimate $\text{IV}_{Q,1}^a$ we proceed differently depending on whether $Q=\cl{Q}$ or not. In the latter case, we use Proposition \ref{prop: estim D} and the fact that $\mu(Q) \sim \mu(Q^{(1)})$ to get
\begin{align*}
|\text{IV}_{Q,1}^a|& \le \frac{\nu_{|x_Q|}^{|x_Q|}}{\mu(Q)} \|b_{1,Q}\|_1 \sum_{x\in Q^{(1)}} \sum_{\substack{R\in\mathcal{F}^1 \\ \mu(R)\le \mu(Q)}} |b_{2,R}(x)|\mu(x) \\ 
& \lesssim \|b_{1,Q}\|_1 \frac{1}{\mu(Q^{(1)})} \sum_{\substack{R\in\mathcal{F}^1 \\ R \subsetneq Q^{(1)}}} \|b_{2,R}\|_1 \\
& \lesssim \|b_{1,Q}\|_1 \avg{f_2}{Q^{(1)}} \lesssim \|b_{1,Q}\|_1 \avg{f_2}{Q_0}.
\end{align*}
	If $Q=\cl{Q}$, then we split again as
	\begin{align*}
		\text{IV}_{Q,1}^a&=\sum_{x\in Q} \Diff_{\anc{\cl Q}{1}}b_{1,Q}(x) \sum_{\substack{R\in\mathcal{F}^1 \\ \mu(R)\le \mu(Q)}} b_{2,R}(x)\mu(x)+\sum_{\substack{x\in \anc Q1\setminus Q\\x\neq \Or Q^{(1)}}} \Diff_{\anc{\cl Q}{1}}b_{1,Q}(x) \sum_{\substack{R\in\mathcal{F}^1 \\ \mu(R)\le \mu(Q)}} b_{2,R}(x)\mu(x)\\
	&=:\text{IV}_{Q,1}^{a,1}+\text{IV}_{Q,1}^{a,2}.
	\end{align*}
	 Hence Proposition \ref{prop: estim D} yields
	\begin{align*}
		|\text{IV}_{Q,1}^{a,1}| \le&
		\sum_{x\in Q} |\Diff_{\anc{\cl Q}{1}}b_{1,Q}(x) |\sum_{\substack{R\in\mathcal{F}^1 \\ R^{(1)}=Q^{(1)}}} |b_{2,R}(x)|\mu(x)\\
		\lesssim&  \|b_{1,Q}\|_1 \frac{1}{\mu(Q)} \sum_{x\in Q} \sum_{\substack{R\in\mathcal{F}^1 \\ R^{(1)}=Q^{(1)}}} |b_{2,R}(x)|\mu(x)\\
		\le& \|b_{1,Q}\|_1\Biggl( \sum_{\substack{S\in\D_1(Q^{(1)}) \\ S\ne Q}}\avg{f_2\one_S}{Q^{(1)}}+\avg{f_2}{Q}\Biggr)\\
		\le&  \|b_{1,Q}\|_1\bigl(\avg{f_2}{Q^{(1)}}+\avg{f_2}{Q}\bigr)\lesssim  \|b_{1,Q}\|_1\bigl(\avg{f_2}{Q_0}+\avg{f_2}{Q}\bigr).
	\end{align*}
Finally, for the last term, again by applying Proposition~\ref{prop: estim D} and using the fact that if $\mu(R)\le\mu(Q)$ and $Q^{(1)}=R^{(1)}$ then $R=\cl{R}$,
\begin{align*}
	&|\text{IV}_{Q,1}^{a,2}| \le \sum_{\substack{x\in Q^{(1)}\setminus Q\\
			x\neq\Or Q^{(1)}}} |\Diff_{\anc{\cl Q}{1}}b_{1,Q}(x) |\sum_{\substack{R\in\mathcal{F}^1 \\ \mu(R)\le \mu(Q)}} |b_{2,R}(x)|\mu(x)\\
	&\lesssim  \|b_{1,Q}\|_1\frac{1}{\mu(Q)} \frac{1}{q(\Or Q^{(1)})}\sum_{\substack{x\in Q^{(1)}\setminus Q\\
			x\neq\Or Q^{(1)}}} \sum_{\substack{R\in\mathcal{F}^1 \\ \mu(R)\le \mu(Q)}} |b_{2,R}(x)|\mu(x)\\
	&\le \|b_{1,Q}\|_1 \frac{1}{\mu(Q)} \frac{1}{q(\Or Q^{(1)})}\Bigg( \sum_{\substack{S\in\D_1(Q^{(1)})\\S\notin\mathcal{F}^1}} \sum_{\substack{R\in\mathcal{F}^1 \\ R^{(1)}\subseteq S}} \sum_{x\in S} |b_{2,R}(x)|\mu(x) + \sum_{\substack{R=\cl{R}\in\mathcal{F}^1 \\ R^{(1)}=Q^{(1)}}} \sum_{x\in Q^{(1)}\setminus Q} |b_{2,R}(x)|\mu(x)\Bigg)\\
	&\le \|b_{1,Q}\|_1\Bigg(\frac{1}{q(\Or Q^{(1)})}
	\sum_{\substack{S\in\D_1(Q^{(1)})\\S\notin \mathcal{F}^1}} \avg{\sum_{\substack{R\in\mathcal{F}^1 \\ R^{(1)}\subseteq S}}|b_{2,R}|}{S}+\sum_{\substack{R=\cl{R}\in\mathcal{F}^1 \\ R^{(1)}=Q^{(1)}}} \frac{1}{q(\Or Q^{(1)})} \frac{\|b_{2,R}\|_1}{\mu(Q)} \Bigg)\\
	&\lesssim \|b_{1,Q}\|_1 \avg{f_2}{Q_0} \Bigg(\frac{1}{q(\Or Q^{(1)})}\sum_{\substack{S\in\D_1(Q^{(1)})\\S\notin \mathcal{F}^1}}1\Bigg) + \|b_{1,Q}\|_1 \frac{1}{q(\Or Q^{(1)})}  \sum_{\substack{R=\cl{R}\in\mathcal{F}^1 \\ R^{(1)}=Q^{(1)}}} \avg{f_2}{R}\\
	&\le \|b_{1,Q}\|_1 \Bigg(\avg{f_2}{Q_0}  + \frac{1}{q(\Or Q^{(1)})}  \sum_{\substack{R=\cl{R}\in\mathcal{F}^1 \\ R^{(1)}=Q^{(1)}}} \avg{f_2}{R}\Bigg).
\end{align*}
	So, when $Q$ is a sector, we control the term $\text{IV}_{Q,1}$ by
	\[|\text{IV}_{Q,1}|\lesssim \|b_{1,Q}\|_1\Big(\avg{f_2}{Q_0} +\avg{f_2}{Q} + \frac{1}{q(\Or Q^{(1)})}  \sum_{\substack{R=\cl{R}\in\mathcal{F}^1 \\ R^{(1)}=Q^{(1)}}} \avg{f_2}{R}\Big).\]
Collecting all the estimates together yields
\begin{align*}
    |\text{IV}_Q|& \leq|\text{IV}_{Q,1}|+\sum_{k=2}^{|x_Q|} |\text{IV}_{Q,k}|\\
    & \lesssim \|b_{1,Q}\|_1\Bigg(\avg{f_2}{Q_0}\Big(1+\sum_{k=2}^{|x_Q|} \nu_{|x_Q|-k+1}^{|x_Q|}\Big) + \avg{f_2}{Q}+ \frac{1}{q(\Or Q^{(1)})}  \sum_{\substack{R=\cl{R}\in\mathcal{F}^1 \\ R^{(1)}=Q^{(1)}}} \avg{f_2}{R}\Bigg)\\
    & \lesssim \|b_{1,Q}\|_1 \avg{f_2}{Q_0} + \|b_{1,Q}\|_1\avg{f_2}{Q} +\|b_{1,Q}\|_1 \frac{1}{q(\Or Q^{(1)})}  \sum_{\substack{R=\cl{R}\in\mathcal{F}^1 \\ R^{(1)}=Q^{(1)}}} \avg{f_2}{R},
\end{align*}
whenever $Q=\cl{Q}$, and
$$
|\text{IV}_Q| \lesssim \|b_{1,Q}\|_1 \avg{f_2}{Q_0}
$$
in the singleton case. Finally, we obtain
\begin{align*}
    |\text{IV}|\lesssim& \sum_{Q\in\mathcal{F}^1}|\text{IV}_Q| \\
     \lesssim &  \sum_{Q\in\mathcal{F}^1} \|b_{1,Q}\|_1 \big(\avg{f_2}{Q_0}+ \avg{f_2}{Q}\big) + \sum_{Q=\cl{Q}\in\mathcal{F}^1} \|b_{1,Q}\|_1 \frac{1}{q(\Or Q^{(1)})}  \sum_{\substack{R=\cl{R}\in\mathcal{F}^1 \\ R^{(1)}=Q^{(1)}}} \avg{f_2}{R}\\
    \le& \avg{f_1}{Q_0} \avg{f_2}{Q_0} \mu(Q_0) + \sum_{Q\in\mathcal{F}^1} \avg{f_1}{Q} \avg{f_2}{Q} \mu(Q) \\
    & + \sum_{Q=\cl{Q}\in\mathcal{F}^1} \avg{f_1}{Q}  \bigg(\frac{1}{q(\Or Q^{(1)})} \sum_{\substack{R=\cl{R}\in\mathcal{F}^1 \\ R^{(1)}=Q^{(1)}}} \avg{f_2}{R}\bigg)\mu(Q).
\end{align*}
This concludes the estimate for IV and yields \eqref{eq: claim first step}, which in turn implies 	
\begin{align*}
|\langle \Proj^{Q_0} f_1,f_2\rangle|
\leq &C\Bigg(\avg{f_1}{Q_0} \avg{f_2}{Q_0} \mu(Q_0) + \sum_{Q\in\mathcal{F}^1} \avg{f_1}{Q} \avg{f_2}{Q} \mu(Q) \\
& + \sum_{Q=\cl{Q}\in\mathcal{F}^1} \avg{f_1}{Q}  \bigg(\frac{1}{q(\Or Q^{(1)})} \sum_{\substack{R=\cl{R}\in\mathcal{F}^1 \\ R^{(1)}=Q^{(1)}}} \avg{f_2}{R}\bigg)\mu(Q)\Bigg)+\sum_{R\in\mathcal{F}^1}\langle \Proj^R(f_1\one_R),f_2\one_R\rangle,
\end{align*}
because $\supp(\Proj^Rf)\subseteq R$ for each $R\in \D$. On the other hand, 
\begin{align*}
\mu(\Omega^0) & \leq \mu(\Omega^0_1)+\mu(\Omega^0_2) \\
& \leq \|\M_\D\|_{L^1(\Tr)\to L^{1,\infty}(\Tr)}\Bigl(\frac{\|f_1\|_1}{4\|\M_\D\|_{L^1(\Tr)\to L^{1,\infty}(\Tr)}\avg{ f_1}{Q_0}}+ \frac{\|f_2\|_1}{4\|\M_\D\|_{L^1(\Tr)\to L^{1,\infty}(\Tr)} \avg{f_2}{Q_0}}\Bigr)\\
& =\frac{\mu(Q_0)}{2}.
\end{align*}
We may then declare that $Q_0 \in \Ss$ and choose $E_{Q_0}=Q_0\setminus \Omega^0$, because we have just proved that $\mu(E_{Q_0})\geq \frac12\mu(Q_0)$. The proof then ends by iteration. Indeed, for every $R\in\mathcal{F}^1$ we can now estimate
\begin{align*}
|\langle \Proj^R(f_1\one_R) & ,f_2\one_R\rangle|
\leq C\Bigg(\avg{f_1}{R} \avg{f_2}{R} \mu(R) + \sum_{T\in\mathcal{F}^1(R)} \avg{f_1}{T} \avg{f_2}{T} \mu(T) \\
& + \sum_{T=\cl{T}\in\mathcal{F}^2(R)} \avg{f_1}{T}  \bigg(\frac{1}{q(\Or T^{(1)})} \sum_{\substack{S=\cl{S}\in\mathcal{F}^2(R) \\ S^{(1)}=T^{(1)}}} \avg{f_2}{S}\bigg)\mu(T)\Bigg)+\sum_{T\in\mathcal F^2(R)}\langle \Proj^T(f_1\one_T),f_2\one_T\rangle,
\end{align*}
applying \eqref{eq: claim first step}. We then proclaim that $\mathcal{F}^1 \subseteq \Ss$ and construct the corresponding sets $E_R$ for each $R\in\mathcal{F}^1$ as before. The procedure can be iterated and yields the assertion after summing all the estimates for each cube.
\end{proof}

\begin{remark}
We have chosen to write down the proof of Theorem \ref{th:theoremA} specifically for $\Proj$, but we could have formulated our result in a much more general way. Clearly, the iterative scheme of proof and the construction of $\Ss$ do not rely on the specific definition of $\Proj$ and could have been carried over with any operator. Then, the core of the proof consists in estimating the action of the truncations $\Proj_{\mathcal{F}^1}^{Q_0}$ over the parts of the \CZ decomposition of $f_1$ and $f_2$. Those estimates in turn rely on uniform $L^2$-boundedness of truncations (good-good term), which can be formulated in a general way, and kernel estimates (good-bad and bad-bad terms). The kernel estimates and the form $\Extras$ are what is specific of $\Proj$ and of the way in which we choose to split our operator into scales. It is absolutely conceivable that the same scheme of proof can be applied to the study of other operators whose structure is tied to that of a martingale filtration in any filtered space, finding wider applicability. In the nondoubling situation, however, one should expect that additional terms like $\Extras$ play a role in the study of most sparse domination results to compensate the lack of regularity of the underlying filtration.  
\end{remark}

\begin{remark}
Theorem \ref{th:theoremA} recovers the standard sparse domination result for $\Proj$ when $\qf:\Tr\to\N$ is bounded. Indeed, in that case $\mu(Q^{(1)}) \leq C_{\mathrm{doub}} \mu(Q)$ for all $Q$ and so
\begin{align*}
\Extras(f_1,f_2) & =\sum_{Q\in \Ss} \avg{f_1}{Q} \frac{1}{q(x_Q^{(1)})} \left(\sum_{\substack{R=\cl{R}\in\Ss \\ R^{(1)}=Q^{(1)}}}\avg{f_2}{R}\right)\mu(Q)\\
& \leq \sum_{Q\in \Ss} \avg{f_1}{Q} \frac{1}{q(x_Q^{(1)})} \avg{f_2}{Q^{(1)}} \mu(Q^{(1)})\\
& \lesssim\sum_{Q\in \Ss} \avg{f_1}{Q^{(1)}} \avg{f_2}{Q^{(1)}} \mu(Q^{(1)}). \\
\end{align*}
But 
$$
\mu(E_Q) \geq \frac{1}{2}\mu(Q) \geq \frac{1}{2C_{\mathrm{doub}}} \mu(Q^{(1)}),
$$
which means that if $\Ss$ is sparse, then the family $\tilde{\Ss}=\{Q^{(1)}: Q\in \Ss\}$ is $1/(2C_{\mathrm{doub}})$-sparse |we have called $1/2$-sparse families just sparse in the rest of the paper|, and that the bound given by Theorem \ref{th:theoremA} becomes
$$
\langle \Proj f_1,f_2\rangle \lesssim \langle \As f_1, f_2\rangle + \langle \mathcal{A}_{\tilde{\Ss}} f_1, f_2\rangle.
$$
This is a sparse estimate, because the union of two sparse families is sparse.
\end{remark}

\subsection{Discrete Bekoll\'e-Bonami weights} \label{sec3} We now study the weighted inequalities that follow from Theorem \ref{th:theoremA}. In the doubling setting, the natural class of weights is the (dyadic) Muckenhaupt one: for $1<p<\infty$, set
\begin{equation*}
    \BB_p(\mu):=\Bigl\{w\colon\Tr\to (0,\infty)\colon [w]_{\BB_p(\mu)}:=\sup_{Q\in\D}\avg{w}{Q}\avg{w^{-\frac{p'}{p}}}{Q}^{\frac{p}{p'}}<\infty\Bigr\}.
\end{equation*}
$\As$ satisfies weighted inequalities for weights belonging to the Muckenhaupt class. Indeed, for $w\in\BB_p(\mu)$, $\As$ is bounded on $L^p(\Tr,w)$ and (see, for example, \cite{CMP2012}) 
\begin{equation*}
    \|\As\|_{L^p(\Tr,w)\to L^p(\Tr,w)} \lesssim [w]_{\BB_p(\mu)}^{\max\{1,\frac{p'}{p}\}}.
\end{equation*}
Due to the presence of $\Extras$ in the sparse domination, the Muckenhaupt class is too large and so we introduce
\begin{equation*}
    \tBB_p(\mu):=\Bigl\{w\colon\Tr\to (0,\infty)\colon [w]_{\tBB_p(\mu)}:=\sup_{\substack{Q,R\in\D\\Q^{(1)}=R^{(1)}}}\avg{w}{Q}\avg{w^{-\frac{p'}{p}}}{R}^{\frac{p}{p'}}<\infty\Bigr\}.
\end{equation*}
It is clear that $[w]_{\BB_p(\mu)}\leq[w]_{\tBB_p(\mu)}$, and so $\tBB_p(\mu)\subseteq \BB_p(\mu)$. In the proof of Theorem \ref{th:theoremC}, we will use the following fact --satisfied by $\tBB_p(\mu)$ weights--: if $E\subseteq S$ and $\mu(E)\sim\mu(S)$, then
\begin{equation} \label{eq:ApSparse}
        w(S)\lesssim [w]_{\BB_p(\mu)} w(E).
\end{equation}

\begin{proof}[Proof of Theorem \ref{th:theoremB}]
Let $1<p<\infty$, $f\in L^p(\Tr,w)$ and $g\in L^{p'}(\Tr,w)$. By Theorem \ref{th:theoremA} we have that there exists a sparse collection $\mathcal{S}$ such that
\begin{equation*}
    \langle \Proj f,gw\rangle\lesssim \langle \As f,gw\rangle+\Extras(f,gw).
\end{equation*}
Furthermore, we already know that $\As$ is bounded on $L^p(\Tr,w)$, so that we only have to deal with $\Extras(f,gw)$:
\begin{align*}
    \Extras(f,gw)
    =&\sum_{Q\in \Ss} \avg{f}{Q} \Bigg(\frac{1}{q(x_Q^{(1)})}\sum_{\substack{R=\cl{R}\in\Ss \\ R^{(1)}=Q^{(1)}}}\avg{gw}{R}\Bigg) \mu(Q) \\
    =& \sum_{Q\in \Ss} \avg{fw^{\frac{p'}{p}}}{Q,w^{-\frac{p'}{p}}} (w^{-\frac{p'}{p}}(Q))^{\frac{1}{p}}\Bigg(\frac{1}{q(x_Q^{(1)})} (w^{-\frac{p'}{p}}(Q))^{1-\frac{1}{p}} \sum_{\substack{R=\cl{R}\in\Ss \\ R^{(1)}=Q^{(1)}}}\avg{gw}{R}\Bigg) \\
    \le & \Bigg(\sum_{Q\in \Ss} \avg{fw^{\frac{p'}{p}}}{Q,w^{-\frac{p'}{p}}}^p w^{-\frac{p'}{p}}(Q)\Bigg)^{\frac{1}{p}}
    \Bigg(\sum_{Q\in \Ss} w^{-\frac{p'}{p}}(Q) \Bigg(\frac{1}{q(x_Q^{(1)})}\sum_{\substack{R=\cl{R}\in\Ss \\ R^{(1)}=Q^{(1)}}} \avg{gw}{R}\Bigg)^{p'}\Bigg)^{\frac{1}{p'}}.
\end{align*}
Concerning the first factor, by \eqref{eq:ApSparse} and the fact that $[w^{-\frac{p'}{p}}]_{\BB_{p'}(\mu)}=[w]_{\BB_p(\mu)}^{\frac{p'}{p}}$, we obtain
\begin{align*}
    \Bigg(\sum_{Q\in \Ss} \avg{fw^{\frac{p'}{p}}}{Q,w^{-\frac{p'}{p}}}^p w^{-\frac{p'}{p}}(Q)\Bigg)^{\frac{1}{p}}
    \lesssim& [w^{-\frac{p'}{p}}]_{\BB_{p'}(\mu)}^{\frac{1}{p}} \Bigg(\sum_{Q\in \Ss} \avg{fw^{\frac{p'}{p}}}{Q,w^{-\frac{p'}{p}}}^p w^{-\frac{p'}{p}}(E_Q)\Bigg)^{\frac{1}{p}} \\
    \le& [w]_{\BB_p(\mu)}^{\frac{p'}{p^2}} \Bigg(\sum_{x\in\Tr} \left(\M_{\D,w^{-\frac{p'}{p}}}(f w^{\frac{p'}{p}})\right)^p\!(x)\,w^{-\frac{p'}{p}}(x)\mu(x) \Bigg)^{\frac{1}{p}} \\
    \le & [w]_{\BB_p(\mu)}^{\frac{p'}{p^2}} \Bigg(\sum_{x\in\Tr} f^p(x) w^{p'-\frac{p'}{p}}(x)\mu(x) \Bigg)^{\frac{1}{p}} \\
    =& [w]_{\BB_p(\mu)}^{\frac{p'}{p^2}} \|f\|_{L^p(\Tr,w)}.
\end{align*}
For the second one we use Jensen's inequality to get
\begin{align*}
    \Bigg(\sum_{Q\in \Ss} w^{-\frac{p'}{p}}(Q) \Bigg(\frac{1}{q(x_Q^{(1)})} &\sum_{\substack{R=\cl{R}\in\Ss \\ R^{(1)}=Q^{(1)}}} \avg{gw}{R}\Bigg)^{p'}\Bigg)^{\frac{1}{p'}}
    \\
     = &\;\Bigg(\sum_{Q\in \Ss} \frac{w^{-\frac{p'}{p}}(Q)}{\mu(Q)^{p'}} \Bigg(\frac{1}{q(x_Q^{(1)})}\sum_{\substack{R=\cl{R}\in\Ss \\ R^{(1)}=Q^{(1)}}} \avg{gw}{R,w}w(R)\Bigg)^{p'}\Bigg)^{\frac{1}{p'}} \\
      \leq &\;\Bigg(\sum_{Q\in \Ss} \frac{w^{-\frac{p'}{p}}(Q)}{\mu(Q)^{p'}} \frac{1}{q(x_Q^{(1)})}\sum_{\substack{R=\cl{R}\in\Ss \\ R^{(1)}=Q^{(1)}}} \avg{gw}{R,w}^{p'}w(R)w(R)^{p'-1}\Bigg)^{\frac{1}{p'}} \\
     \lesssim &\; \Bigg(\sup_{\substack{Q,R\in\D \\ Q^{(1)}=R^{(1)}}} \frac{w^{-\frac{p'}{p}}(Q)}{\mu(Q)} \frac{w(R)^{p'-1}}{\mu(R)^{p'-1}} \Bigg)^{\frac{1}{p'}} \\
    & \times \Bigg(\sum_{Q\in \Ss} \frac{1}{q(x_Q^{(1)})}\sum_{\substack{R=\cl{R}\in\Ss \\ R^{(1)}=Q^{(1)}}} \avg{gw}{R,w}^{p'} w(R)\Bigg)^{\frac{1}{p'}}\\
     \le &\;  \Bigg(\sup_{\substack{Q,R\in\D \\ Q^{(1)}=R^{(1)}}} \avg{w^{-\frac{p'}{p}}}{Q}^{\frac{1}{p'}} \avg{w}{R}^{\frac{1}{p}}\Bigg)
    \Bigg(\sum_{Q\in \Ss}  \avg{gw}{Q,w}^{p'} w(Q)\Bigg)^{\frac{1}{p'}} \\
     \lesssim &\; [w]_{\tBB_p(\mu)}^{\frac1p}[w]_{\BB_p(\mu)}^{\frac{1}{p'}} \|g\|_{L^{p'}(\Tr,w)},
\end{align*}
where in the last step we proceeded as we did for the first factor.
Finally, gathering all the estimates together
\begin{equation*}
    \Extras(f,gw)\lesssim [w]_{\tBB_p(\mu)}^{\frac1p}[w]_{\BB_p(\mu)}^{\frac{p'}{p^2}+\frac{1}{p'}} \|f\|_{L^{p}(\Tr,w)} \|g\|_{L^{p'}(\Tr,w)}
\end{equation*}
and then
\begin{equation*}
    \langle \Proj f,gw\rangle \lesssim [w]_{\tBB_p(\mu)}^{\frac1p}[w]_{\BB_p(\mu)}^{\frac{p'}{p^2}+\frac{1}{p'}} \|f\|_{L^{p}(\Tr,w)} \|g\|_{L^{p'}(\Tr,w)}.
\end{equation*}
\end{proof}

\begin{remark}
Our Muckenhaupt-type conditions can be viewed as natural counterparts of the conditions found by Bekoll\'e and Bonami \cite{BB1978} in the continuous setting. Indeed, there the only balls that appear in the condition on the weights are those whose radii are comparable to their distance to the boundary. Using the usual embedding of $\Tr$ in the hyperbolic disk (as in \cite{CC1994}), the sets that correspond to those balls in the tree are precisely the elements of $\D$ which are not singletons, which are the ones that appear in the definition of the class $\tBB_p(\mu)$.
\end{remark}

\begin{remark}
When $q$ is bounded, $\Proj$ admits sparse domination in the usual sense and we recover the linear dependence on $[w]_{\BB_2(\mu)}$ in its $L^2$-bound. This would be in line with the sharp result in \cite{PR2013}. Our proof does not provide that because of $\Extras$, which gives a power of $3/2$ in the $L^2$-case. We suspect that this is not an artifact of the proof but we leave it as an interesting open problem. 
\end{remark}

\section{Endpoint estimates} \label{sec4}

\subsection{General \CZ theory in $\Tr$}

In this section we work with $L^2$-bounded integral operators $T$ of the form
$$
Tf(x) = \sum_{y \in \Tr} K(x,y) f(y) \mu(y), 
$$
for some kernel $K\colon\Tr\times\Tr\to\C$ which satisfies $K(x,y)=K(y,x)$ for all $x,y\in\Tr$. $K$ is said to satisfy H\"ormander's condition if

\begin{equation} \label{eq:HormCond} \tag{H\"ormCond}
    \sup_{Q\in\D}\sup_{x,y\in Q} \sum_{z\in\Tr\setminus Q} |K(z,x)-K(z,y)|\mu(z)<\infty.
\end{equation}
The above is just the classical H\"ormander's condition rewritten in our setting. 
The following estimate can be viewed as an integral size condition in the spirit of the one in \cite{CoPa2019} |already present in \cite{To2001b}, albeit in a more implicit fashion|:
\begin{equation} \label{eq:sizeCond} \tag{SizeCond}
    \sup_{Q \in \D} \sup_{x\in Q}\sum_{z\in Q^{(1)}\setminus Q}|K(x,z)|\mu(z)<\infty.
\end{equation}

\begin{remark}
The symmetry condition $K(x,y)=K(y,x)$ for the kernel can be dropped but then \eqref{eq:HormCond} and \eqref{eq:sizeCond} have to be modified accordingly to adjust the proof below. In particular, $\Hone$ and $L^1$ estimates require the integration to run over the first variable of the kernel, while $\BMO$ ones require that it runs over the second.  
\end{remark}
Throughout this subsection, we do not need to assume that $\Tr$ is radial, nor do we need any assumption on the measure $\mu$. 

\begin{proof}[Proof of Theorem \ref{th:theoremC}] We proof the three claimed estimates in turn. 

\textbf{Weak-type bound.} Let $0\leq f \in L^1$. If $0<\lambda \leq \|f\|_1$, then
$$
\lambda \mu\left(\{x\in\Tr: |Tf(x)|>\lambda \}\right) \leq \lambda \leq \|f\|_1,
$$
and there is nothing to prove. Otherwise, fix $\lambda>\|f\|_1$. We apply Lemma \ref{lem:CZnonhomogeneous} to the \CZ cubes $\{Q_j\}_j$ |that is, to the smallest disjoint cubes in $\D$ that cover $\{\M_\D f > \lambda\}$|. Denote $\Omega_\lambda = \cup_j Q_j$ and write
		\begin{align*}
			\mu\Bigl(\Bigl\{x\in \Tr: |T f(x)| > \lambda \Bigr\}\Bigr)  \leq & \; \mu(\Omega_\lambda) + \mu\Bigl(\Bigl\{x\in \Tr: |T g(x)| >\frac \lambda2 \Bigr\}\Bigr) \\
			& + \mu\Bigl(\Bigl\{x\in \Tr\setminus\Omega_\lambda: |T b(x)| >\frac \lambda2 \Bigr\}\Bigr)\\
			 =: & \; \mathrm{I}+\mathrm{II}+\mathrm{III}.
		\end{align*}
The weak-type of $\M_\D$ immediately gives 
\begin{equation*}
    \mathrm{I} \lesssim \frac{\|f\|_1}{\lambda}.
\end{equation*}
Chebyshev's inequality, the $L^2$-boundedness of $T$, and property (3) of Lemma \ref{lem:CZnonhomogeneous} yield
\begin{equation*}
    \mathrm{II} \lesssim \frac{1}{\lambda^2} \|T g \|_2^2 \lesssim \frac{1}{\lambda^2} \| g \|_2^2 \lesssim \frac{1}{\lambda} \|f\|_1.
\end{equation*}
Finally, by Chebyshev's inequality again,
		\begin{align*}
			\mathrm{III} & \leq \frac{1}{\lambda} \sum_{x\in\Tr\setminus\Omega_\lambda} |T b(x)| \mu(x)
			\leq \frac{1}{\lambda} \sum_j \sum_{x\in \Tr\setminus Q_j} |T b_j(x)| \mu(x)
			=: \frac{1}{\lambda} \sum_j \mathrm{III}_j.
		\end{align*}
To estimate each term, we first split into two regions:
		\begin{align*}
			\mathrm{III}_j & = \sum_{x\in \Tr\setminus Q_j} \Bigl| \sum_{y\in\anc {Q}1_j} K(x,y) b_j(y) \mu(y)\Bigr| \mu(x) \\
			& = \sum_{x\in \Tr\setminus\anc{Q}{1}_j} \Bigl| \sum_{y\in \anc {Q}1_j} K(x,y) b_j(y) \mu(y)\Bigr| \mu(x)
			+ \sum_{x\in \anc {Q}{1}_j\setminus Q_j} \Bigl| \sum_{y\in \anc {Q}{1}_j} K(x,y) b_j(y) \mu(y)\Bigr| \mu(x).
		\end{align*}
For the first one, we use the mean zero of $b_j$ and~\eqref{eq:HormCond}:
		\begin{align*}
		 \sum_{x\in \Tr\setminus\anc{Q}{1}_j} \Bigl| \sum_{y\in \anc {Q}1_j} &K(x,y) b_j(y) \mu(y)\Bigr| \mu(x) = \sum_{x\in \Tr\setminus\anc{Q}{1}_j}  \Bigl| \sum_{y\in \anc {Q}1_j} (K(x,y)-K(x,\Or{Q_j})) b_j(y) \mu(y)\Bigr| \mu(x) \\
			& \leq \sum_{y\in \anc {Q}1_j} |b_j(y)| \Bigl( \sup_{z\in \anc {Q}1_j} \sum_{x\in \Tr\setminus\anc{Q}{1}_j} \left|  K(x,y)-K(x,z) \right|\mu(x) \Bigr)\mu(y)
			\lesssim \|b_j\|_1.
		\end{align*}
For the remaining term, we further use the definition of $b_j$:
		\begin{align*}
			 \sum_{x\in \anc {Q}{1}_j\setminus Q_j} &\Bigl| \sum_{y\in \anc {Q}{1}_j} K(x,y) b_j(y) \mu(y)\Bigr| \mu(x) \\
			&\leq \sum_{x\in \anc Q1_j\setminus Q_j} \Bigl| \sum_{y\in Q_j} K(x,y) b_j(y) \mu(y)\Bigr| \mu(x)
			+ \sum_{x\in \anc Q1_j} \Bigl|  T\Bigl(\langle f\one_{Q_j}\rangle_{\anc Q1_j}\one_{\anc Q1_j}\Bigr)(x)\Bigr| \mu(x) \\
			& =: \alpha_j + \beta_j.
		\end{align*}
		We deal with $\beta_j$ using H\"older's inequality and the $L^2$-bound:
		\begin{align*}
			\beta_j & \leq \mu(\anc Q1_j)^{\frac12} \left\| T\left(\langle f \one_{Q_j}\rangle_{\anc Q1_j} \one_{\anc Q1_j} \right)\right\|_2
			\lesssim \mu(\anc Q1_j)^{\frac12} \left\|\langle f \one_{Q_j}\rangle_{\anc Q1_j} \one_{\anc Q1_j} \right\|_2 =\|f\one_{Q_j}\|_1.
		\end{align*}
For $\alpha_j$ we use Fubini's theorem and \eqref{eq:sizeCond}:
		\begin{align*}
			\alpha_j&\leq \sum_{x\in \anc Q1_j\setminus Q_j}  \sum_{y\in Q_j} |K(x,y)| |f(y)| \mu(y)\mu(x)\\
			& \leq \sum_{y\in Q_j} |f(y)| \mu(y) \sup_{y' \in Q_j} \sum_{x\in\anc Q1_j \setminus Q_j} |K(x,y')| \mu(x) \lesssim \|f \one_{Q_j}\|_1. 
		\end{align*}
		Finally, we collect all estimates together and by property (2) of Lemma \ref{lem:CZnonhomogeneous},
		\begin{equation*}
			\mathrm{III} \lesssim \frac{1}{\lambda} \sum_j \left( \|b_j\|_1 + \|f \one_{Q_j}\|_1\right)\lesssim \frac{1}{\lambda} \|f\|_1,
		\end{equation*}
		as desired.

\textbf{$L^\infty-\BMO$ bound.} We use the characterization of the $\BMO$ norm given in \eqref{eq:equivnorm}. Let $f\in L^\infty(\Tr)$. The estimate
$$
\sup_{Q\in\D} \frac{1}{\mu(Q)}\sum_{x\in Q} |Tf(x)-\avg{Tf}{Q}| d\mu \lesssim \|f\|_\infty 
$$
is standard and we omit its proof. On the other hand, for each $Q\in \D$ we use the kernel representation to write
\begin{align*}
    \left|\avg{Tf}{Q}- \avg{Tf}{Q^{(1)}} \right| =& \frac{1}{\mu(Q)\mu(Q^{(1)})} \left|\sum_{x\in Q}Tf(x)\mu(x) -\sum_{z\in Q^{(1)}} Tf(z) \mu(z)\right| \\
    =& \frac{1}{\mu(Q)\mu(Q^{(1)})} \left|\sum_{\substack{x\in Q\\z\in Q^{(1)}}}\sum_{y \in \Tr} (K(x,y)-K(z,y))f(y)\mu(y)\mu(x)\mu(z)\right| \\ 
    \leq& \sup_{\substack{x\in Q\\ z\in Q^{(1)}}} \sum_{y \in \Tr \setminus Q^{(1)}} \left|K(x,y)-K(z,y))\right||f(y)|\mu(y) \\
    & + \avg{|T(f\one_Q)|}{Q} + \avg{|T(f\one_{Q^{(1)}})|}{Q^{(1)}} + \sup_{x\in Q} \sum_{y\in Q^{(1)}\setminus Q} |K(x,y)||f(y)|\mu(y) \\
    =:& \mathrm{I} + \mathrm{II} + \mathrm{III} + \mathrm{IV}.\\
\end{align*}
The estimate $\mathrm{I} \lesssim \|f\|_\infty$ follows directly from \eqref{eq:HormCond}. Next, by the $L^2$-bound for $T$
$$
\mathrm{II} \leq\mu(Q)^{-\frac12}\|Tf\|_{L^2(Q)} \lesssim \mu(Q)^{-\frac12}\|f\|_{L^2(Q)} \lesssim \|f\|_\infty,
$$
and a similar estimate holds for $\mathrm{III}$. Finally, \eqref{eq:sizeCond} implies $\mathrm{IV} \lesssim \|f\|_\infty$ and completes the proof.

\textbf{$\Hone-L^1$ bound.} It is enough to prove that $\|Tb\|_1\lesssim 1$ for every simple atomic block $b$. Fix one such $b$, so that 
$$
b=a - \avg{a}{Q}\one_Q,
$$
where $\supp(a) \subseteq R \in \D_1(Q)$ and $\|a\|_\infty \leq \mu(R)^{-1}$. We write
$$
\|Tb\|_1 \leq \|Tb\|_{L^1(Q)} + \|Tb\|_{L^1(\Tr\setminus Q)},
$$
and handle each term in turn. For the first one, we further split
$$
\|Tb\|_{L^1(Q)} \leq \|Ta\|_{L^1(R)} +\|Ta\|_{L^1(Q\setminus R)}+ |\avg{a}{Q}|\|T1_{Q}\|_{L^1(Q)}.
$$
The first term on the right hand side above is dealt with using the $L^2$-bound of $T$ and the size estimate on $a$:
$$
\|Ta\|_{L^1(R)} \leq \mu(R)^{\frac12} \|Ta\|_{L^2(R)} \lesssim \mu(R)^{\frac12} \|a\|_{L^2(R)} \leq 1.
$$
Similarly, for the third term on the right hand side above we obtain
$$
|\avg{a}{Q}|\|T1_{Q}\|_{L^1(Q)} \leq \|a\|_\infty \frac{\mu(R)}{\mu(Q)}\|T1_{Q}\|_{L^2(Q)}\mu(Q)^{\frac12} \lesssim 1,
$$
using $\supp(a) \subseteq R$. The second term on the right hand side above requires \eqref{eq:sizeCond}:
\begin{align*}
\|Ta\|_{L^1(Q\setminus R)} & \leq \sum_{x \in Q\setminus R} \sum_{y\in R} |K(x,y)||a(y)|\mu(y)\mu(x)\\
& \leq \|a\|_1 \sup_{y\in R}\sum_{x \in Q\setminus R}|K(x,y)|\mu(x)\\
& \lesssim \|a\|_1 \leq \|a\|_\infty \mu(R) \leq 1.
\end{align*}
Finally, using \eqref{eq:HormCond} and the mean zero of $b$ in $Q$ yields
\begin{align*}
    \|Tb\one_{\Tr\setminus Q}\|_1 &=\sum_{x\in\Tr\setminus Q} |\sum_{y\in Q} K(x,y)b(y)\mu(y)|\mu(x)\\
    &=\sum_{x\in\Tr\setminus Q} \big|\sum_{y\in Q} (K(x,y)-K(x,x_Q))b(y)\mu(y)\big|\mu(x)\\
    &\le\|b\|_1 \sup_{y\in Q} \sum_{x\in\Tr\setminus Q}\big|K(x,y)-K(x,v_Q)\big|\mu(x)\\
    &\lesssim \|b\|_1 \leq \|b\|_{\Hone} \leq 1,\\ 
\end{align*}
which finishes the proof.
\end{proof}

\subsection{Endpoint results for $\Proj$} We start checking the H\"ormander and size conditions for the kernel of $\Proj$. As one could have expected, they are weaker than the estimates in Section \ref{sec1} and can be deduced from them. For instance, \eqref{eq:HormCond} can be checked as follows: fix a non-singleton $Q\in\D$ |otherwise, there is nothing to prove| and $x\neq y \in Q$. Summing in annuli and using Proposition \ref{prop:Krest} yields
\begin{align*}
    \sum_{z\in \Tr\setminus Q} |\K &(z,x) -\K(z,y)| \mu(z) = \sum_{\ell=1}^{|x_Q|} \sum_{z\in Q^{(\ell)}\setminus Q^{(\ell-1)}} |\K(z,x)-\K(z,y)| \mu(z) \\
    & \leq \sum_{\ell=1}^{|x_Q|} \mu(x_Q^{(\ell)}) \frac{\nu_{|x_Q|-\ell+1}^{|x_Q|}}{\mu(Q^{(\ell)})} + \sum_{\ell=1}^{|x_Q|} \mu(Q^{(\ell)}\setminus (Q^{(\ell-1)} \cup \{x_Q^{(\ell)}\})) \frac{\nu_{|x_Q|-\ell}^{|x_Q|}}{\mu(Q^{(\ell-1)})} \\
    & \lesssim \sum_{\ell=1}^{|x_Q|} \nu_{|x_Q|-\ell+1}^{|x_Q|} + \sum_{\ell=1}^{|x_Q|} \mu(Q^{(\ell-1)})q(x_Q^{(\ell)})  \frac{\nu_{|x_Q|-\ell+1}^{|x_Q|}}{q(x_Q^{(\ell)})\mu(Q^{(\ell-1)})} \lesssim 1.
\end{align*}
A similar computation using Lemma \ref{lem: KrestF}, part (2) gives \eqref{eq:sizeCond} for $\K$. As a consequence, we can apply Theorem \ref{th:theoremC} to $\Proj$ and deduce the corresponding endpoint estimates. We finish proving the stronger $\Hone-\Hone$ and $\BMO-\BMO$ endpoint bounds.

\begin{proof}[Proof of Theorem \ref{th:theoremD}] Even if the $\Hone$ estimate implies the $\BMO$ one, we prove both directly, without using duality.

\textbf{$\BMO\to\BMO$ estimate:} Let $f\in\BMO$. Since $\Proj\one_\Tr=\one_\Tr$
        \begin{equation*}
            \left|\sum_{x\in\Tr} \Proj f(x)\mu(x)\right|
            =\left|\sum_{z\in\Tr} f(z)\mu(z)\right|
            \le \|f\|_\BMO,
        \end{equation*}
and we just have to bound the two suprema in~\eqref{eq:equivnorm}. Fix $Q\in\D$ and split $f$ as $f=f-\avg{f}{Q^{(1)}}+\avg{f}{Q^{(1)}}=:\tilde f +\avg{f}{Q^{(1)}}$. Since $\Proj$ sends constants to constants, it is enough estimate the $\BMO$ norm of $\Proj \tilde f$. Since $\tilde f$ has mean $0$ over $Q^{(1)}$, we have 
        \begin{equation}\label{eq:tildemeans}
            |\langle \tilde f \rangle_{Q^{(\ell)}}| + \sup_{R\in \D_\ell(Q)} |\langle \tilde f \rangle_{R}| \lesssim (\ell +1) \|f\|_{\BMO}, \quad \ell \geq 0. 
        \end{equation}
To estimate the norm of $\Proj \tilde f$, we study separately the two suprema in \eqref{eq:equivnorm}. For the first, we fix $Q\in\D$ and write $\tilde f= \tilde f\one_{Q}+ \tilde f\one_{\Tr\setminus Q}$ and we denote by $\mathrm{I}$ and $\mathrm{II}$ the corresponding associated terms after applying triangle inequality. Concerning the first part, John-Nirenberg inequality and the $L^2$-boundedness of $\Proj$ give
\begin{align*}
    \mathrm{I} & = \frac{1}{\mu(Q)} \sum_{x\in Q} |\Proj(\tilde f\one_Q)(x)-\avg{\Proj(\tilde f\one_Q)}{ Q}|\mu(x)\\
    &\lesssim \left(\frac{1}{\mu(Q)} \sum_{x\in Q} |\Proj(\tilde f\one_Q)(x)|^2\mu(x)\right)^{\frac{1}{2}} \lesssim \left(\frac{1}{\mu(Q)} \sum_{x\in \Tr} |\tilde f\one_Q(x)|^2\mu(x)\right)^{\frac{1}{2}}\\
    &=\left(\frac{1}{\mu(Q)} \sum_{x\in Q} |f(x)-\avg{f}{Q^{(1)}}|^2\mu(x)\right)^{\frac{1}{2}} \le \|f\|_\BMO.
\end{align*}
For $\mathrm{II}$, if $Q$ is a single point there is nothing to do. Otherwise, we decompose it into annuli:
\begin{align*}
    \mathrm{II}&=\frac{1}{\mu(Q)} \sum_{x\in Q} |\Proj(\tilde f\one_{\Tr\setminus Q})(x)-\avg{\Proj(\tilde f\one_{\Tr\setminus Q})}{ Q}|\mu(x)\\
    &=\frac{1}{\mu(Q)} \sum_{x\in Q} \left|\sum_{z\in \Tr\setminus Q} \K(z,x)\tilde f(z)\mu(z)-\frac{1}{\mu(Q)}\sum_{y\in Q}\sum_{z\in \Tr\setminus Q} \K(z,y)\tilde f(z)\mu(z)\mu(y)\right|\mu(x)\\
    &\le\frac{1}{\mu(Q)^2} \sum_{x\in Q}\sum_{y\in Q} \sum_{z\in \Tr\setminus Q} |\K(z,x)-\K(z,y)||\tilde f(z)|\mu(z)\mu(y)\mu(x)\\
    & \leq \sup_{x,y \in Q} \sum_{z\in \Tr\setminus Q} |\K(z,x)-\K(z,y)||\tilde f(z)|\mu(z)\\
    & \leq \sum_{\ell=1}^{|x_Q|} \sup_{x,y \in Q} \sum_{z\in Q^{(\ell)}\setminus Q^{(\ell-1)}} |\K(z,x)-\K(z,y)||\tilde f(z)|\mu(z) =: \sum_{\ell=1}^{|x_Q|} \mathrm{II}_\ell.
\end{align*}
We now estimate each term using Lemma \ref{prop:Krest} and \eqref{eq:tildemeans}:
\begin{align*}
\mathrm{II}_\ell & \lesssim |\tilde{f}(x_Q^{(\ell)})|\mu(x_Q^{(\ell)}) \frac{\nu_{|x_Q|-\ell+1}^{|x_Q|}}{\mu(Q^{(\ell)})} + \frac{\nu_{|x_Q|-\ell}^{|x_Q|}}{\mu(Q^{(\ell-1)})} \sum_{z\in Q^{(\ell)}\setminus (Q^{(\ell-1)} \cup \{x_Q^{(\ell)}\})} |\tilde{f}(z)| \mu(z) \\
& \leq \nu_{|x_Q|-\ell+1}^{|x_Q|}  \avg{|\tilde f|}{Q^{(\ell)}}  + \nu_{|x_Q|-\ell+1}^{|x_Q|} \frac{1}{q(x_Q^{(\ell)})}\sum_{R\in \D_1(Q^{(\ell)})}  \avg{|\tilde f|}{R}  \\
& \leq 2\cdot \nu_{|x_Q|-\ell+1}^{|x_Q|} \avg{|\tilde f|}{Q^{(\ell)}} + \nu_{|x_Q|-\ell+1}^{|x_Q|} \frac{1}{q(x_Q^{(\ell)})}\sum_{R\in \D_1(Q^{(\ell)})} \avg{|\tilde f|}{R}-\avg{|\tilde f|}{Q^{(\ell)}}\\
& \lesssim \ell \cdot\nu_{|x_Q|-\ell+1}^{|x_Q|} \||\tilde f|\|_{\BMO} \lesssim  \ell\cdot\nu_{|x_Q|-\ell+1}^{|x_Q|} \| f\|_{\BMO}.
\end{align*}
The resulting quantity is acceptable due to the geometric decay of $\nu_{|x_Q|-\ell}^{|x_Q|}$. To bound the second term in \eqref{eq:equivnorm}, we split
\begin{align*}
    |\avg{\Proj \tilde f}{Q}-\avg{\Proj \tilde f}{Q^{(1)}}|
    \le& |\avg{\Proj (\tilde f \one_{Q})}{Q}|
    +|\avg{\Proj(\tilde f \one_{Q^{(1)}})}{Q^{(1)}}| \\
    & +|\avg{\Proj (\tilde f\one_{Q^{(1)}\setminus Q})}{Q}|
    +|\avg{\Proj (\tilde f\one_{\Tr\setminus Q^{(1)}})}{Q}-\avg{\Proj (\tilde f\one_{\Tr\setminus Q^{(1)}})}{Q^{(1)}}|.
\end{align*}
The first two terms above are estimated similarly. Indeed, by the $L^2$-boundedness of $\Proj$,
$$
|\avg{\Proj(\tilde f \one_{Q^{(1)}})}{Q^{(1)}}| \lesssim \left(\avg{|\tilde f|^2 }{Q^{(1)}}\right)^\frac{1}{2} = \left(\avg{|\tilde f - \avg{\tilde f}{Q^{(1)}}|^2 }{Q^{(1)}}\right)^\frac{1}{2} \leq \|\tilde{f}\|_{\BMO},
$$
and a similar computation applies to $|\avg{\Proj (\tilde f \one_{Q})}{Q}|$. The third term above is estimated using Lemma \ref{lem: KrestF}, part (2), as follows:
\begin{align*}
    |\langle\Proj (\tilde f & \one_{Q^{(1)}\setminus Q})\rangle_{Q}|  \leq \frac{1}{\mu(Q)} \sum_{x\in Q} \sum_{y \in Q^{(1)}\setminus Q} |\K(x,y)| |\tilde f(y)| \mu(y) \\
    \leq& \sup_{x\in Q}\sum_{y \in Q^{(1)}\setminus Q} |\K(x,y)| |\tilde f(y)| \mu(y) \lesssim \sum_{y \in Q^{(1)}\setminus Q} |\K(x_Q,y)| |\tilde f(y)| \mu(y) \\
    \leq& \sum_{y \in Q^{(1)}\setminus Q} \sum_{\ell=1}^{|x_Q|} |\K_{Q^{(\ell)}}(x_Q,y)| |\tilde f(y)| \mu(y) \\
    \leq &\left(\sum_{\ell=1}^{|x_Q|}\frac{1}{\mu(Q^{(\ell)})}\right) f(x_Q^{(1)}) \mu(x_Q^{(1)}) \\
    & + \left(\frac{1}{q(x_Q^{(1)}) \mu(Q)} + \sum_{\ell=1}^{|x_Q|}\frac{1}{\mu(Q^{(\ell)})}\right) \sum_{y\in Q^{(1)} \setminus (Q \cup \{x_Q^{(1)}\})} |\tilde f (y) | \mu(y)\\
    \lesssim & \avg{|\tilde f|}{Q^{(1)}} + \frac{1}{q(x_Q^{(1)})} \sum_{R \in \D_1(Q^{(1)})} \avg{|\tilde f|}{R} \lesssim \|f\|_{\BMO},
\end{align*}
finishing the computation as with $\mathrm{II}_\ell$ before. Finally, for the last term we write
            \begin{align*}
                |\avg{\Proj (\tilde f\one_{\Tr\setminus Q^{(1)}})}{Q}&-\avg{\Proj (\tilde f\one_{\Tr\setminus Q^{(1)}})}{Q^{(1)}}|\\
                \le & \frac 1{\mu (Q)}\frac 1{\mu (Q^{(1)})}\sum_{x\in Q}\sum_{y\in Q^{(1)}}|\Proj (\tilde f\one_{\Tr\setminus Q^{(1)}})(x)-\Proj (\tilde f\one_{\Tr\setminus Q^{(1)}})(y)|\mu(y)\mu(x)\\
                \le & \frac 1{\mu (Q)}\frac 1{\mu (Q^{(1)})}\sum_{x\in Q}\sum_{y\in Q^{(1)}}\sum_{z\in \Tr\setminus Q^{(1)}}|\tilde f(z)||\K(z,x)-\K(z,y)|\mu(z)\mu(y)\mu(x)\\
                \le & \sup_{x,y \in Q^{(1)}} \sum_{z\in \Tr\setminus Q^{(1)}}|\tilde f(z)||\K(z,x)-\K(z,y)|\mu(z),
            \end{align*}
and we split into annuli as with term $\mathrm{I}$ above to conclude.

\textbf{$\Hone \to \Hone$ estimate:} Let $b\in\Hone$ be a simple atomic block supported on $Q^{(1)} \in \D$, i.e. $b=a-\avg{a}{\anc Q1}1_{\anc Q1}$, with $a$ supported in $Q$ satisfying the size condition. It is enough to check that $\|\Proj b\|_{\Hone} \lesssim 1$, and we do it by finding a decomposition of it into atomic blocks that need not be simple, by the equivalence in Lemma \ref{lem:simpleAtoms}. Because of cancellation, we have
        \begin{align*}
            \Proj b&=\sum_{R\in\D} \Diff_Rb=
            \sum_{\substack{R\in\D\\R\subsetneq \anc Q1}}\Diff_Rb+
        \sum_{\substack{R\in\D\\R\supseteq \anc Q1}}\Diff_Rb\\
            &=
            \sum_{\substack{R\in\D\\ R\subsetneq\anc Q1}}\sum_{\ell}\bigl\langle a-\avg{a}{\anc Q1},h_R^\ell\Bigr\rangle h_R^\ell +\sum_{k=1}^{|Q|} \Diff_{\anc{Q}{k}}b
        =\Proj^{Q}a+\sum_{k=1}^{|Q|} \Diff_{\anc{Q}{k}}b.
        \end{align*}
Clearly, $\supp(\Proj^Qa)\subseteq Q$ and has mean zero, so it is an atomic block. By the $L^2$-boundedness, $\|\Proj^Q a\|_2\lesssim\|a\|_2\leq \mu(Q)^{-1/2}$, so $\|\Proj^Q a\|_{\Hone} \lesssim 1$.

We next claim that, for each $k$, $\Diff_{\anc{Q}{k}}b$ is an atomic block. It is supported on $\anc{Q}{k}$, where it has zero mean, and we can write it as
        \begin{equation*}
            \Diff_{\anc{Q}{k}}b=\one_{\anc{Q}{k-1}}\Diff_{\anc{Q}{k}}b+
            \sum_{\substack{T:\anc{T}{1}=\anc{Q}{k}\\T\ne\anc{Q}{k-1}}} \one_T\Diff_{\anc{Q}{k}}b.
        \end{equation*}
We have
        \begin{align*}
            \|\one_{\anc{Q}{k-1}}\Diff_{\anc{Q}{k}}b\|_2
            &\le \mu(\anc{Q}{k-1})^{\frac{1}{2}}\|\one_{\anc{Q}{k-1}}\Diff_{\anc{Q}{k}}b\|_\infty\\
            &\lesssim \frac{\mu(\anc{Q}{k-1})^{\frac{1}{2}} \nu_{|x_Q|-k+1}^{|x_Q|}}{\mu(\anc{Q}{k-1})}\|b\|_1\\
            &\le \frac{\nu_{|x_Q|-k+1}^{|x_Q|}}{\mu(\anc{Q}{k-1})^{\frac{1}{2}}},
        \end{align*}
        by H\"older's inequality, Proposition \ref{prop: estim D} and the fact that $\|b\|_1\le\|b\|_{\Hone}\le 1$. Analogously we get
        \begin{equation*}
            \|\one_T\Diff_{\anc{Q}{k}}b\|_2\lesssim \frac{\nu_{|x_Q|-k}^{|x_Q|}}{\mu(\anc{Q}{k-1})^{\frac{1}{2}}}
        \end{equation*}
        for every $T\in\D$ such that $T^{(1)}=\anc{Q}{k}$, $T\ne\anc{Q}{k-1}$.
        It follows that $\Diff_{\anc{Q}{k}}b$ is an atomic block for every $k=1,\dots,|Q|$ and
$$
 \|\Diff_{\anc{Q}{k}}b\|_2  \lesssim \frac{\nu_{|x_Q|-k+1}^{|x_Q|}}{\mu(\anc{Q}{k-1})^{\frac{1}{2}}} +(q(x_Q^{(k)})-1)\frac{\nu_{|x_Q|-k}^{|x_Q|}}{\mu(\anc{Q}{k-1})^{\frac{1}{2}}}
\sim \frac{\nu_{|x_Q|-k+1}^{|x_Q|}}{\mu(\anc{Q}{k-1})^{\frac{1}{2}}}.
$$
Finally, we get
\begin{equation*}
    \|\Proj b\|_{\Hone}\lesssim 1+\sum_{k=1}^{|Q|}\nu_{|x_Q|-k+1}^{|x_Q|}\lesssim 1.
\end{equation*}
\end{proof}

\begin{remark}
    It is natural to ask whether the above estimates apply to the Bergman shifts introduced in Remark \ref{rem:remBergShifts}. It is not difficult to see that this is the case if $q$ is bounded, because in that case the corresponding kernels satisfy \eqref{eq:HormCond} and \eqref{eq:sizeCond}. However, in the general radial case $\mu$ fails to be balanced (in the sense of \cite{CPW2023}), so one cannot expect a general result to hold, and therefore there is no hope of a richer Calder\'on-Zygmund theory via representation as in \cite{Hy2012,DPWW2024}.
\end{remark}

\bibliographystyle{alpha}
\bibliography{sources}

\noindent\textbf{Jose M. Conde-Alonso}  \\
\texttt{jose.conde@uam.es}      \\
Dept. of Mathematics, Universidad Aut\'onoma de Madrid, 7 Francisco Tom\'as y Valiente, 28049 Madrid, Spain. \\

\noindent\textbf{Filippo De Mari}  \\
\texttt{filippo.demari@unige.it}      \\
Dipartimento di Matematica and MaLGa Center, Università di Genova,
Via Dodecaneso 35, 16146 Genova, Italy. \\

\noindent\textbf{Matteo Monti}  \\
\texttt{matteo.monti@unibg.it}      \\
Dipartimento di Ingegneria Gestionale, dell’Informazione e della Produzione, Università di Bergamo,
Viale Marconi 5, 24044 Dalmine (BG), Italy.\\

\noindent\textbf{Elena Rizzo}  \\
\texttt{elena.rizzo@edu.unige.it}      \\
Dipartimento di Matematica and MaLGa Center, Università di Genova,
Via Dodecaneso 35, 16146 Genova, Italy.\\

\noindent\textbf{Maria Vallarino}  \\
\texttt{maria.vallarino@polito.it}      \\
Dipartimento di Scienze Matematiche ``Giuseppe Luigi Lagrange'', Politecnico di Torino, Corso Duca degli Abruzzi 24, 10129 Torino, Italy. \\ 

\end{document}